\DeclareMathAlphabet{\mathbbold}{U}{bbold}{m}{n}
\newcommand\polymake{\texttt{polymake}\xspace}
\newcommand\mptopcom{\texttt{mptopcom}\xspace}
\newcommand\TOPCOM{\texttt{TOPCOM}\xspace}
\newcommand\topcom\TOPCOM
\newcommand\ZZ{\mathbb{Z}}
\newcommand\RR{\mathbb{R}}
\newcommand\1{{\mathbbold 1}}
\DeclareMathOperator{\seccone}{sec}
\DeclareMathOperator{\MC}{MC}
\DeclareMathOperator{\MF}{MF}
\DeclareMathOperator{\Dr}{Dr}
\DeclareMathOperator{\TGr}{TGr}
\DeclareMathOperator{\Sym}{Sym}
\DeclareMathOperator{\conv}{conv}
\DeclareMathOperator{\vol}{vol}
\DeclareMathOperator{\Vertices}{Vert}
\newcommand\bigmid{\,\bigl\vert\bigr.\,}
\newcommand{\DualGraph}{G^{\vee}}
\newcommand{\PC}{\mathcal{C}}
\newcommand{\envelope}{\mathcal{E}}
\theoremstyle{theorem}
\newtheorem{theorem}{Theorem}
\newtheorem{proposition}[theorem]{Proposition}
\newtheorem{lemma}[theorem]{Lemma}
\newtheorem{corollary}[theorem]{Corollary}
\theoremstyle{definition}
\newtheorem{example}[theorem]{Example}
\theoremstyle{remark}
\newtheorem{remark}[theorem]{Remark}
\newtheorem{observation}[theorem]{Observation}
\newtheorem{question}[theorem]{Question}
\title[Hypersimplices with a view]{Subdivisions of Hypersimplices: \\ with a View Toward Finite Metric Spaces}
\author{Laura Casabella}
\author{Michael Joswig}
\author{Lars Kastner}
\address[Laura Casabella]{
	Max-Planck Institute for Mathematics in the Sciences, Leipzig,
	\url{laura.casabella@mis.mpg.de}
}
\address[Michael Joswig]{
	Technische Universität Berlin,
	Chair of Discrete Mathematics/Geometry \\
	Max-Planck Institute for Mathematics in the Sciences, Leipzig,
	\url{joswig@math.tu-berlin.de}
}
\address[Lars Kastner]{
	Technische Universität Berlin,
	Chair of Discrete Mathematics/Geometry,
	\url{lkastner@math.tu-berlin.de}
}
\thanks{MJ has been supported by the Deutsche Forschungsgemeinschaft (DFG, German Research Foundation) under Germany's Excellence Strategy -- The Berlin Mathematics Research Center MATH$^+$ (EXC-2046/1, project ID 390685689) and \enquote{Symbolic Tools in Mathematics and their Application} (TRR 195, project ID 286237555).
LK has been supported by the MaRDI project \cite{mardi} under DFG project ID 460135501.
}
\begin{document}

\begin{abstract}
  The secondary fan $\Sigma(k,n)$ is a polyhedral fan which stratifies the regular subdivisions of the hypersimplices $\Delta(k,n)$.
  We find new infinite families of rays of $\Sigma(k,n)$, and we compute the fans $\Sigma(2,7)$ and $\Sigma(3,6)$.
  In the special case $k=2$ the fan $\Sigma(2,n)$ is closely related to the metric fan $\MF(n)$, which forms a natural parameter space for the metric spaces on $n$ points.
  So our results yield a classification of the finite metric spaces on seven points.
\end{abstract}

\maketitle

\section{Introduction}

The hypersimplex $\Delta(k,n)$ is the convex hull of those $0/1$-vectors of length $n$ which have exactly $k$ ones.
Equivalently, this is the matroid (base) polytope of the uniform matroid of rank $k$ on $n$ elements.
We are interested in the regular subdivisions of $\Delta(k,n)$; the latter are those polyhedral subdivisions which are induced by a height function.
The secondary fan, which we denote as $\Sigma(k,n)$, is a polyhedral fan which subdivides the space of height functions into cones which yield the same subdivision.
This line of research is motivated by tropical geometry.
The \emph{Dressian} $\Dr(k,n)$ is the subfan of $\Sigma(k,n)$ which comprises those lifting functions which induce a regular subdivision into matroidal cells.
In the language of tropical geometry, this means that the Dressian $\Dr(k,n)$ is the moduli space of uniform tropical linear spaces with parameters $k$ and $n$.
Our results contribute to exploring the embedding of $\Dr(k,n)$ into the secondary fan.
For tropical geometry, we refer to the textbooks \cite{Tropical+Book} and \cite{ETC}.

Another reason to study the regular subdivisions of hypersimplices comes from finite metric spaces, which are concerned with the special case $k=2$.
Sturmfels and Yu \cite{SturmfelsYu:2004} saw that each metric space on $n$ points has a natural interpretation as a height function on the second hypersimplex $\Delta(2,n)$.
Conversely, up to some technicalities, such a height function gives rise to a metric.
In the context of phylogenetics, Bandelt and Dress \cite{BandeltDress:1992} obtained a fundamental decomposition theorem for finite metric spaces on $n$ points; see also \cite[\S11.1.2]{DezaLaurent:2010}.
In polyhedral geometry terms, their split decomposition theorem \cite[Theorem~2]{BandeltDress:1992} amounts to characterizing the rays of the Dressian $\Dr(2,n)$ as a subfan of $\Sigma(2,n)$.
Following this line of research, it is tempting to ask for all the rays of $\Sigma(2,n)$, or even $\Sigma(k,n)$ for arbitrary $k$ and $n$.
However, computational evidence suggests that this is hopeless in general, because there are way too many; see Section~\ref{sec:computation} below.
The split decomposition theorem of Bandelt and Dress has been generalized to arbitrary polytopes by Hirai \cite{Hirai:2006a} and Herrmann and Joswig \Cite{HerrmannJoswig:2008}.

The reasoning above invites the search for at least some of the rays of $\Sigma(k,n)$.
This is potentially useful for tropical geometry because this could shed some light on the unresolved question of how exactly the tropical Grassmannian $\TGr(k,n)$ sits inside $\Dr(k,n)$.
While the tropical Grassmannian is contained in the Dressian as a set, it inherits its fan structure from the Gröbner fan of the Plücker ideal \cite[\S4.3]{Tropical+Book}.
This is known \emph{not} to be compatible with the secondary fan structure in general.
By analyzing rays of $\Sigma(k,n)$ which are somehow near $\Dr(k,n)$, we find regular subdivisions of $\Delta(k,n)$ with rich connections to algebraic combinatorics.
The coarsest matroid subdivisions of $\Delta(d,n)$ for $d\leq 4 $ and $n\leq 10$ have been determined by Schr\"oter \cite{Schroeter:2019}.
In the phylogenetic scenario, finding new rays of $\Sigma(2,n)$ leads to interesting generalizations of the split decomposition theorem.

Our main results are the following.
First, for $2\leq k \leq n-2$, we construct two distinct non-split coarsest regular subdivisions of $\Delta(k,n)$ (Proposition~\ref{thm:lambda}, Theorem~\ref{thm:lambda}, Theorem~\ref{thm:kappa}).
In the special case $k=2$ these yield metrics on $n$ points which are split-prime.
To the best of our knowledge all these subdivisions are new, except for the cases $(k,n)\in\{(2,4),(2,5),(2,6)\}$ where all the coarsest subdivisions of $\Delta(k,n)$ had been classified by Koolen, Moulton and T\"onges \cite{KoolenMoultonTonges:2000} and Sturmfels and Yu \cite{SturmfelsYu:2004}.
None of our new rays lie on the respective Dressians $\Dr(k,n)$.
Hence they do not occur in Schr\"oter's classification \cite{Schroeter:2019}, and none of the corresponding coarsest subdivisions is a multi-split in the sense of Herrmann~\cite{MR2739494}.
Second, we obtain full descriptions of the secondary fans of $\Delta(2,7)$ (Theorem~\ref{thm:computation:Delta27}) and $\Delta(3,6)$ (Theorem~\ref{thm:computation:Delta36}).
The secondary fan of $\Delta(2,7)$, or rather its close relative, the metric fan $\MF(7)$, classifies the metric spaces on seven points; this leads to a direct generalization of the main results of \cite{SturmfelsYu:2004}.

The paper is structured as follows.
We start out in Section \ref{sec:subdivisions} by recalling the basics on polyhedral subdivisions of arbitrary point configurations.
On the way we collect a new sufficient condition for a subdivision to be coarsest.
Section~\ref{sec:hypersimplices} introduces hypersimplices, with a sketch of their role in tropical geometry.
Our constructions of new coarsest regular subdivisions of hypersimplices are presented in Section~\ref{sec:eulerian}.
Afterwards, in Section~\ref{sec:metric} we study the connection between subdivisions of $\Delta(2,n)$ and metric spaces on $n$ points.
Our computational results are summarized in Section~\ref{sec:computation}, and the final Section~\ref{sec:conclusion} comprises two open questions.

%constructing certain infinite families of coarsest regular subdivisions of

% This article is concerned with subdivisions of convex polytopes, without new vertices.
% In the easiest nontrivial case there are precisely two maximal cells, which intersect in a common facet.
% This is called a \emph{split}.
% \cite{Hirai:2006a} \cite{HerrmannJoswig:2008}

\section{Regular Subdivisions and Their Tight Spans}
\label{sec:subdivisions}
We start out with recalling basic concepts from polyhedral geometry, to fix our notation.
To the reader we recommend the textbook \cite{Triangulations} as a general reference on this subject.
Let $A\subset\RR^d$ be a finite set of points.
To simplify our setup we will assume that those points lie in \emph{convex position}, i.e., $A$ agrees with the set of vertices of its convex hull $\conv(A)$, which is a polytope.
Furthermore, we require that $A$ affinely spans the entire space, i.e., we have $\dim\conv(A)=d$.
A finite set of polytopes, $S$, (and all their faces) form a \emph{polyhedral subdivision} of $A$ if the polytopes in $S$ meet face to face and cover $\conv(A)$.
These polytopes are the \emph{cells} of $S$, and they are partially ordered by inclusion.
The \emph{spread} of $S$ is the number of its maximal cells.

Let $\omega:A\to\RR$ be an arbitrary function, which we call a \emph{lifting}.
Then projecting the lower faces of the \emph{lifted polytope} $\conv\{(a,\omega(a)) \mid a\in A\} \subset \RR^{d+1}$ by omitting the last coordinate induces a subdivision of $A$.
Such a subdivision is called \emph{regular}, and we denote it as $A^\omega$.
The \emph{lower faces} of the lifted polytope are those which admit a downward pointing outer normal vector.

We associate another polyhedron with the pair $(A,\omega)$.
The \emph{(lower) envelope} of $A$ with respect to the lifting function $\omega$ is defined as
\begin{equation}\label{eq:envelope}
  \envelope_{\omega}(A) \ := \ \bigl\{x\in\RR^{d+1} \bigmid A'x \geq -\omega \bigr\} \enspace, 
\end{equation}
where $A'$ is the homogenization of $A$, i.e., $A'$ is a matrix where each row corresponds to one point in $A$, with an additional coefficient one appended.
The envelope is always unbounded upward, i.e., in the positive $(d{+}1)$st coordinate direction.
The \emph{tight span} of a subdivision $S$ of $A$ is the dual cell complex of $S$. 
If $S$ is regular and induced by a lifting $\omega$, then the tight span carries a natural structure as the polyhedral complex of the bounded faces of $\envelope_{\omega}(A)$ (see \cite[Theorem 10.59]{ETC}).
Note that the tight span is always connected.
In particular, its vertex set $\Vertices{\envelope_{\omega}(A)}$ is never empty.

\begin{remark}\label{rem:upper-lower}
  It also makes sense to look at the upper faces, instead of the lower ones, to define regular subdivisions.
  By preferring lower convex hulls we follow the convention of the textbook \cite{Triangulations} as well as the software systems \TOPCOM \cite{TOPCOM-paper} and \mptopcom \cite{mptopcom-paper}.
  Upper convex hulls may be simulated by passing to the negative height function.
\end{remark}

The regular subdivisions of $A$ are partially ordered by refinement \cite[\S2.2.3]{Triangulations}.
The minimal elements in this poset are the regular triangulations, while the maximal ones are the \emph{coarsest} regular subdivisions.
A subdivision of $A$ with exactly two maximal cells is a \emph{split} of $A$; this is coarsest, and it is always regular \cite[Lemma 3.5]{HerrmannJoswig:2008}.
By definition, the splits are precisely the (coarsest) subdivisions of spread two.
For a (regular) subdivision $S$ of $A$, the set $\{\omega:A\to\RR \mid A^\omega=S\}$ of lifting functions inducing the same subdivision $S$ is a relatively open polyhedral cone, $\seccone(\omega)$.
Its closure is the \emph{secondary cone}, which we denote as $\seccone(S)$.
The \emph{secondary fan}, $\Sigma(A)$ is the polyhedral fan which is comprised of all secondary cones.
Its lineality space has dimension $d+1$, and it is polytopal, i.e., it arises as the normal fan of some polytope.
The face lattice of $\Sigma(A)$ is anti-isomorphic to the refinement poset of the regular subdivisions of $A$; see \cite[Theorem 5.2.11]{Triangulations}.
In particular, the coarsest subdivisions correspond to the rays of the secondary fan.

Our present work is motivated by two known results.
The first one is a generalization of a key structural result for finite metric spaces, obtained by Bandelt and Dress \cite{BandeltDress:1992}; see Section~\ref{sec:metric} below for further explanations.
\begin{theorem}[{Hirai \cite{Hirai:2006a}; Herrmann and Joswig \cite[Theorem 3.10]{HerrmannJoswig:2008}}]
  \label{thm:split-decomposition}
  Let $\omega$ be a lifting function of a point set $A$.
  Then there is a coherent decomposition
  \begin{equation}
    \label{eq:split-decomposition}
    \omega \ = \ \omega_0+ \sum_{S \text{ split of } A} \lambda_S \omega_S \enspace ,
  \end{equation}
  where $\omega_0$ is split prime, and this decomposition is unique among all coherent decompositions of $\omega$.
\end{theorem}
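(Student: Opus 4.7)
The strategy is to construct the split coefficients $\lambda_S$ greedily by walking to the boundary of the secondary cone $\seccone(A^\omega)$ in each split direction, verify that the residual $\omega_0$ is split-prime, and finally deduce uniqueness from the extremality built into the construction. The organizing principle is coherence: every summand and every partial sum must induce a subdivision that is refined by $A^\omega$, so the entire decomposition lives inside the single closed cone $\seccone(A^\omega)$. As a first consequence, any split $S$ appearing with nonzero coefficient must itself be a coarsening of $A^\omega$, since $A^{\omega_S}$ equals the split $S$; as there are only finitely many splits of $A$, this restricts the sum in \eqref{eq:split-decomposition} to a finite list $S_1,\dots,S_m$.

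For each compatible split $S_i$ I set
$$\lambda_{S_i} \ := \ \max\bigl\{\lambda\geq 0 \bigmid \omega-\lambda\omega_{S_i}\in\seccone(A^\omega)\bigr\}.$$
The maximum is attained because $\seccone(A^\omega)$ is closed and cut out by finitely many linear inequalities in $\lambda$, and it is finite because pushing $\lambda$ too far would destroy the convexity of the lower envelope on some cell of $A^\omega$. Put $\omega_0 := \omega - \sum_{i=1}^m \lambda_{S_i}\omega_{S_i}$. The crucial claim is that every split coefficient of $\omega_0$ vanishes: for each $i$, the lifting $\omega-\lambda_{S_i}\omega_{S_i}$ lies on the facet of $\seccone(A^\omega)$ facing the ray through $\omega_{S_i}$, and subtracting the remaining contributions keeps us on the intersection of all these facets, so that $\omega_0$ sits on every one of them simultaneously and hence is split-prime.

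For uniqueness, let $\omega=\omega_0'+\sum_i\mu_i\omega_{S_i}$ be a second coherent decomposition with $\omega_0'$ split-prime. Neither $\mu_i>\lambda_{S_i}$ nor $\mu_i<\lambda_{S_i}$ can hold: the first would push $\omega-\mu_i\omega_{S_i}$ outside $\seccone(A^\omega)$, violating coherence; the second would endow $\omega_0'$ with a positive split coefficient at $S_i$, violating split-primeness. Hence $\mu_i=\lambda_{S_i}$ for every $i$ and $\omega_0'=\omega_0$. \emph{The main obstacle} is verifying that the facets indexed by distinct compatible splits $S_i$ can be hit simultaneously, so that greedy extraction one direction at a time is consistent with subtracting all $\lambda_{S_i}\omega_{S_i}$ at once. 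This amounts to a mild independence statement for the supporting hyperplanes of the rays $\omega_{S_i}$ inside the secondary fan, which Hirai and Herrmann--Joswig establish via a careful analysis of the envelope $\envelope_\omega(A)$ and its tight span, where each compatible split corresponds to a distinguished bounded face.
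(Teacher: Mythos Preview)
The paper does not prove this theorem; it is stated with attribution to Hirai and to Herrmann--Joswig and then used as background. So there is no proof in the paper to compare against, and your proposal must stand on its own.

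Your outline has the right shape, but the step you label ``the main obstacle'' is not a side issue: it is the entire content of the theorem. You define each $\lambda_{S_i}$ as the maximal amount of $\omega_{S_i}$ that can be subtracted while staying in $\seccone(A^\omega)$, and then assert that $\omega_0=\omega-\sum_i\lambda_{S_i}\omega_{S_i}$ still lies in $\seccone(A^\omega)$ and sits on every split facet. Neither follows from what you have written. In a general polyhedral cone, the maximal step to the boundary in one extreme direction need not commute with the maximal step in another; subtracting all the individual maxima at once can overshoot and leave the cone. What makes this work for splits is precisely the structural result that the compatible splits of $A^\omega$ span a face of $\seccone(A^\omega)$ that is a simplex cone (equivalently, the split rays are linearly independent and their nonnegative span is a face). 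That is the theorem you are trying to prove, and you cannot discharge it by citing Hirai and Herrmann--Joswig, since those are the very references the statement comes from.

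Your uniqueness argument has the same circularity. The claim that $\mu_i<\lambda_{S_i}$ forces $\omega_0'$ to admit a further coherent split contribution in direction $S_i$ again presupposes that subtracting the other $\mu_j\omega_{S_j}$ does not interfere with the $S_i$-facet, i.e., the same independence of split directions. Until that independence is established (via the tight-span analysis you allude to, or an equivalent argument), the proof is a restatement of the theorem rather than a derivation of it.
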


The decomposition $\omega=\alpha+\beta$ of lifting functions is \emph{coherent} if $A^{\omega}$ is the common refinement of $A^{\alpha}$ and $A^{\beta}$. 
A lifting function $\omega$ is \emph{split prime} if none of the rays of the secondary cone of $A^{\omega}$ is a split. 
The lifting functions $\omega_{S}$ for the splits $S$ of $A$ can be chosen arbitrarily.
In the statement of the Theorem, the coefficient $\lambda_S$ denotes the coherency index of $\omega$ with respect to $\omega_{S}$, where $S$ is a split of $A$.

The \emph{coherency index} of $\omega$ with respect to some other height function, $\omega'$, is defined as
\begin{equation}\label{eq:coherency-index}
  \alpha^\omega_{\omega'} \ := 
  \min_{x\in\Vertices{\envelope_{\omega}(A)}}
  \left\{ \max_{x'\in\Vertices{\envelope_{\omega'}(A)}}
    \left\{ \min_{v\in V_{\omega'}(x')}
	\left\{ 
          \frac{\langle v, x \rangle +\omega(v)}{\langle v, x' \rangle +\omega'(v)}
	\right\}
      \right\}
    \right\}  \, ,
\end{equation}
where $V_{\omega'}(x')$ is the set of homogenized points $A'\subset\RR^{d+1}$ that are not contained in the cell dual to $x'$; see \cite[Eq.~(2)]{HerrmannJoswig:2008}.
This generalizes \cite[Eq.~(2)]{KoolenMoultonTonges:1998}.
Since $x'$ is a vertex of the envelope $\envelope_{\omega'}(A)$, that dual cell is a subpolytope of $\conv(A)$.
The coherency index measures how much a decomposition deviates from coherence.
Indeed, $\alpha_{\omega'}^{\omega}$ is the supremum of all positive $\lambda$ in  $\RR$ such that $(\omega - \lambda\omega', \lambda\omega')$ is a coherent decomposition of $\omega$.
For instance, we have $\alpha^\omega_{\omega}=1$ for any lifting function $\omega$.

Our first contribution (Corollary~\ref{cor:coarsest}) is an observation concerning general coarsest subdivisions.
Notice that none of the results in the remainder of this section requires the subdivisions in question to be regular.
The \emph{dual graph} $\DualGraph(\PC)$ of a pure polyhedral complex $\PC$ is the abstract graph whose nodes are the maximal cells.
There is an edge between two nodes for each codimension one face shared by the corresponding maximal cells.
The following observation is elementary and well-known, but we did not find a suitable reference; thus we include a short proof for the sake of completeness.
\begin{lemma}\label{lemma:dg:simple}
  Let $S$ be a subdivision of a polytope $P$.
  Then the dual graph $\DualGraph(S)$ is simple and connected, i.e., $\DualGraph(S)$ is a connected graph without loops or multiple edges.
\end{lemma}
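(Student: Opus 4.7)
The plan is to verify the three properties in turn: no loops, no parallel edges, and connectedness.

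To rule out loops, I would observe that any codimension-one face $F$ is contained in a unique affine hyperplane $H = \mathrm{aff}(F)$, and any maximal cell of $S$ having $F$ as a facet is contained in one of the two closed half-spaces bounded by $H$. Hence at most two maximal cells contain $F$ as a facet; if exactly two do, they lie on opposite sides of $H$ and are therefore distinct. So every edge of $\DualGraph(S)$ connects two different nodes.

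To rule out parallel edges, suppose two distinct maximal cells $C_{1},C_{2}$ share two distinct codimension-one faces $F_{1}\neq F_{2}$. Since $S$ is a polyhedral subdivision, the intersection $C_{1}\cap C_{2}$ is a common face of both cells, and it contains $F_{1}\cup F_{2}$. But the only face of the polytope $C_{1}$ that contains two distinct facets of $C_{1}$ is $C_{1}$ itself, so $C_{1}\cap C_{2}=C_{1}$ and likewise $=C_{2}$, forcing $C_{1}=C_{2}$: contradiction. The main subtlety here is the standard fact that any two distinct facets of a polytope span distinct hyperplanes, which I would cite or briefly invoke.

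For connectedness, I would fix maximal cells $C,C'$ and choose points $p\in\mathrm{relint}(C)$ and $p'\in\mathrm{relint}(C')$. By a small generic perturbation of $p$ and $p'$ inside the respective relative interiors, the straight line segment $[p,p']\subset P$ avoids the finitely many faces of $S$ of codimension at least two. Hence the segment is partitioned into finitely many closed subsegments, each lying in a single maximal cell, and consecutive subsegments meet along a point in the relative interior of a codimension-one face shared by the two corresponding maximal cells. Reading off this sequence of cells yields a walk from $C$ to $C'$ in $\DualGraph(S)$.

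The main obstacle I anticipate is the genericity argument for the segment: one must confirm that, for a fixed finite collection of affine subspaces of codimension at least two in $\RR^{d}$, the set of pairs of points whose connecting segment misses them all is dense in the product of the two relative interiors. This follows from the fact that each such affine subspace imposes a codimension-one constraint on the pair $(p,p')$, so the bad set has Lebesgue measure zero. Once this is in place, the three properties together yield the claim.
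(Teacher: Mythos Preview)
Your proof is correct and follows essentially the same approach as the paper: the paper handles simplicity by noting that two maximal cells sharing two distinct facets would contradict the face-intersection property of a polyhedral subdivision, and proves connectedness via the same generic line segment argument. Your version is a bit more explicit (separating out loops and spelling out the genericity measure-zero reasoning), but the underlying ideas are identical.
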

\begin{proof}
  Assume that $\DualGraph(S)$ contains two nodes which are connected by at least two edges.
  That is to say, there are two maximal cells, $F$ and $G$, in $S$, such that
  \[
    F \cap G \ \supseteq \rho \cup \sigma,\ \mbox{ with } \rho \not= \sigma,
  \]
  where $\rho$ and $\sigma$ are both facets of the polytopes $F$ and $G$.
  This is a contradiction, as any two cells of $S$ meet in a common face, which may be empty.
  
  To show connectedness, pick two distinct maximal cells of $S$; call them $F$ and $G$ again.
  Let $f$ be a generic point in the interior of $F$, and let $g$ be a generic point in the interior of $G$.
  The line segment $L = [f,g]$ is contained in $P$, since $P$ is convex.
  Thus $L$ is covered by $S$.
  The generic choices of $f$ and $g$ entail that $L$ does meet any cell of $S$ of codimension greater than one.
  Following the cells that cover $L$ one gets the desired path from $F$ to $G$ in the dual graph of $S$.
\end{proof}
The following result will turn out to be useful for detecting coarsest subdivisions.
\begin{proposition}\label{prop:dg:contraction}
  Let $S$ and $S'$ be subdivisions of a polytope $P$.
  If $S'$ is a coarsening of $S$, then the dual graph $\DualGraph(S')$ arises from $\DualGraph(S)$ by contracting edges.
\end{proposition}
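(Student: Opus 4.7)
The plan is to define a natural surjection $\pi$ from the vertex set of $\DualGraph(S)$ to that of $\DualGraph(S')$ and then verify that it realizes an edge contraction. Let $d = \dim P$. Since $S$ refines $S'$, every maximal cell $F$ of $S$ is contained in a unique maximal cell of $S'$, which we denote by $\pi(F)$; this defines the candidate vertex map.

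First, one checks that for every maximal cell $F'$ of $S'$ the preimage $\pi^{-1}(F')$ induces a connected subgraph of $\DualGraph(S)$. The cells in $\pi^{-1}(F')$ are precisely the maximal cells of the induced polyhedral subdivision of the polytope $F'$, so Lemma \ref{lemma:dg:simple} applies and yields the required connectivity. Consequently, contracting all edges internal to each preimage collapses $\pi^{-1}(F')$ to a single vertex, and the vertex set of the resulting quotient graph is canonically identified with that of $\DualGraph(S')$.

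Next comes the edge correspondence. For an edge $\{F,G\}$ of $\DualGraph(S)$ with $\pi(F) \neq \pi(G)$, the common facet $F \cap G$ has dimension $d - 1$ and is contained in $\pi(F) \cap \pi(G)$; the latter is a proper face of both $\pi(F)$ and $\pi(G)$, so a dimension count forces it to be a common facet, and $\{\pi(F), \pi(G)\}$ is an edge of $\DualGraph(S')$. Conversely, given an edge $\{F', G'\}$ of $\DualGraph(S')$, pick a generic point in the relative interior of $F' \cap G'$; the face-to-face property of $S$ then supplies maximal cells $F \subseteq F'$ and $G \subseteq G'$ whose intersection contains a $(d-1)$-dimensional neighborhood of that point, so $\{F,G\}$ is an edge of $\DualGraph(S)$ with image $\{F',G'\}$.

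Combining these pieces, after contracting all edges internal to the preimages, the surviving edges of $\DualGraph(S)$ project onto exactly the edges of $\DualGraph(S')$. Any parallel edges or loops produced by the contraction are absorbed, since $\DualGraph(S')$ is simple by Lemma \ref{lemma:dg:simple}; this matches the standard convention for contraction in simple graphs. I expect the main subtlety to be the existence half of the edge correspondence, where the generic-point argument has to rule out that the chosen point lands on a lower-dimensional cell of $S$; this is essentially the same kind of genericity argument already used in the connectedness part of Lemma \ref{lemma:dg:simple}.
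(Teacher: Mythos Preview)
Your proof is correct and follows essentially the same approach as the paper: define the natural map $\pi$ from maximal cells of $S$ to those of $S'$, then verify the edge correspondence in both directions via the dimension count on $\pi(F)\cap\pi(G)$ and the observation that every codimension-one cell of $S'$ is a union of codimension-one cells of $S$. Your version is in fact a bit more thorough than the paper's, since you explicitly check (via Lemma~\ref{lemma:dg:simple} applied to the induced subdivision of $F'$) that each fiber $\pi^{-1}(F')$ is connected---a point the paper's proof leaves implicit but which is needed for the contraction to yield the correct vertex set.
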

\begin{proof}
  Let $F$ and $G$ be two distinct maximal cells in $S$ which are adjacent, i.e., they span an edge in $\DualGraph(S)$.
  Then there are uniquely determined maximal cells $F'$ and $G'$ of $S'$ such that $F'$ contains $F$ and $G'$ contains $G$.

  Then two cases may occur.
  Either $F'$ and $G'$ are distinct.
  Then $F' \cap G' \supseteq F\cap G$, which has codimension one, and $\{F',G'\}$ is an edge of $\DualGraph(S')$.
  Or $F'=G'$, and the edge $\{F,G\}$ of $\DualGraph(S)$ gets contracted in $\DualGraph(S')$.

  Each edge of $\DualGraph(S')$ arises from the first case above, because each codimension one cell of $S'$ is a union of codimension one cells of $S$.
\end{proof}

\begin{corollary}\label{cor:coarsest}
  Let $A$ be a set of points in convex position, and let $S$ be a proper subdivision of $A$.
  If the dual graph of $S$ is complete, then $S$ is a coarsest subdivision.
\end{corollary}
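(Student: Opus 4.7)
The plan is to argue by contradiction. Suppose $S$ admits a strict coarsening $S'$ which is itself a proper subdivision of $A$, i.e., $S'$ has at least two maximal cells. Write $F_1,\dots,F_n$ for the maximal cells of $S$; since $S$ is proper, $n \geq 2$. If $n=2$, then $S$ is a split and already coarsest, so I may assume $n \geq 3$.

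Next, I would invoke Proposition~\ref{prop:dg:contraction}: $\DualGraph(S')$ arises from $\DualGraph(S) = K_n$ by contracting at least one edge, so two distinct cells $F_i, F_j$ of $S$ lie inside a common maximal cell $F'_1$ of $S'$. Because $S'$ has at least two maximal cells, there is a second maximal cell $F'_2$ of $S'$ containing some $F_k$ with $k \notin \{i,j\}$.

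The completeness of $\DualGraph(S)$ forces both $F_i \cap F_k$ and $F_j \cap F_k$ to be codimension-one faces of $S$, i.e., two facets of the $d$-dimensional polytope $F_k$. Both are contained in $F'_1 \cap F'_2$. Applying Lemma~\ref{lemma:dg:simple} to $S'$, this intersection is a single face of $S'$, and since it contains codimension-one pieces it is itself of codimension one. Its affine hull is therefore a hyperplane $H$ that must contain both facets of $F_k$; but distinct facets of a full-dimensional polytope have distinct supporting hyperplanes, so $F_i \cap F_k = F_j \cap F_k$. This common codimension-one face is then shared by three distinct maximal cells $F_i, F_j, F_k$ of $S$, contradicting the standard fact that each codimension-one face of a polyhedral subdivision is shared by at most two maximal cells.

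The main obstacle I anticipate is the hyperplane argument in the third paragraph: one needs that $F'_1 \cap F'_2$ is a single face of $S'$ \emph{and} of codimension one, so that its affine span is the unique hyperplane that must then contain both facets of $F_k$. Once this is in place, both the coincidence of the two facets and the contradiction with the sharing property follow immediately.
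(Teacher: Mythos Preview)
Your argument is correct and proceeds along the same lines as the paper's: both invoke Proposition~\ref{prop:dg:contraction} and then the simplicity of the dual graph. The paper's version stays purely at the graph level, observing that contracting any edge of $K_n$ with $n\geq 3$ already produces a multiedge, which Lemma~\ref{lemma:dg:simple} forbids; you instead unpack this geometrically via supporting hyperplanes, arriving at the equivalent contradiction that three maximal cells of $S$ would share a codimension-one face (a minor point: the fact that $F'_1\cap F'_2$ is a single face comes straight from the definition of a polyhedral subdivision rather than from Lemma~\ref{lemma:dg:simple}).
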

\begin{proof}
  Without loss of generality, we may assume that the subdivision $S$ is not a split.
  Take $S'$ to be any coarsening of $S$.
  Then $\DualGraph(S')$ is obtained from $\DualGraph(S)$ by contraction according to Proposition~\ref{prop:dg:contraction}.
  Since $\DualGraph(S)$ is complete, any contraction will produce multiedges.
  But $\DualGraph(S')$ cannot have multiedges according to Lemma~\ref{lemma:dg:simple}.
  The only solution is that $\DualGraph(S')$ only consists of a single node, meaning that $S'$ is the trivial subdivision.
\end{proof}

A subdivision of $A$ is an \emph{$\ell$-split} if its tight span is a simplex of dimension $\ell-1$.
Here the splits reappear as $2$-splits.
Herrmann showed that $\ell$-splits are necessarily regular and coarsest \cite{MR2739494}.
In this way Corollary~\ref{cor:coarsest} is a generalization.
We also use the term \emph{multi-split} if the parameter $\ell$ is not important.

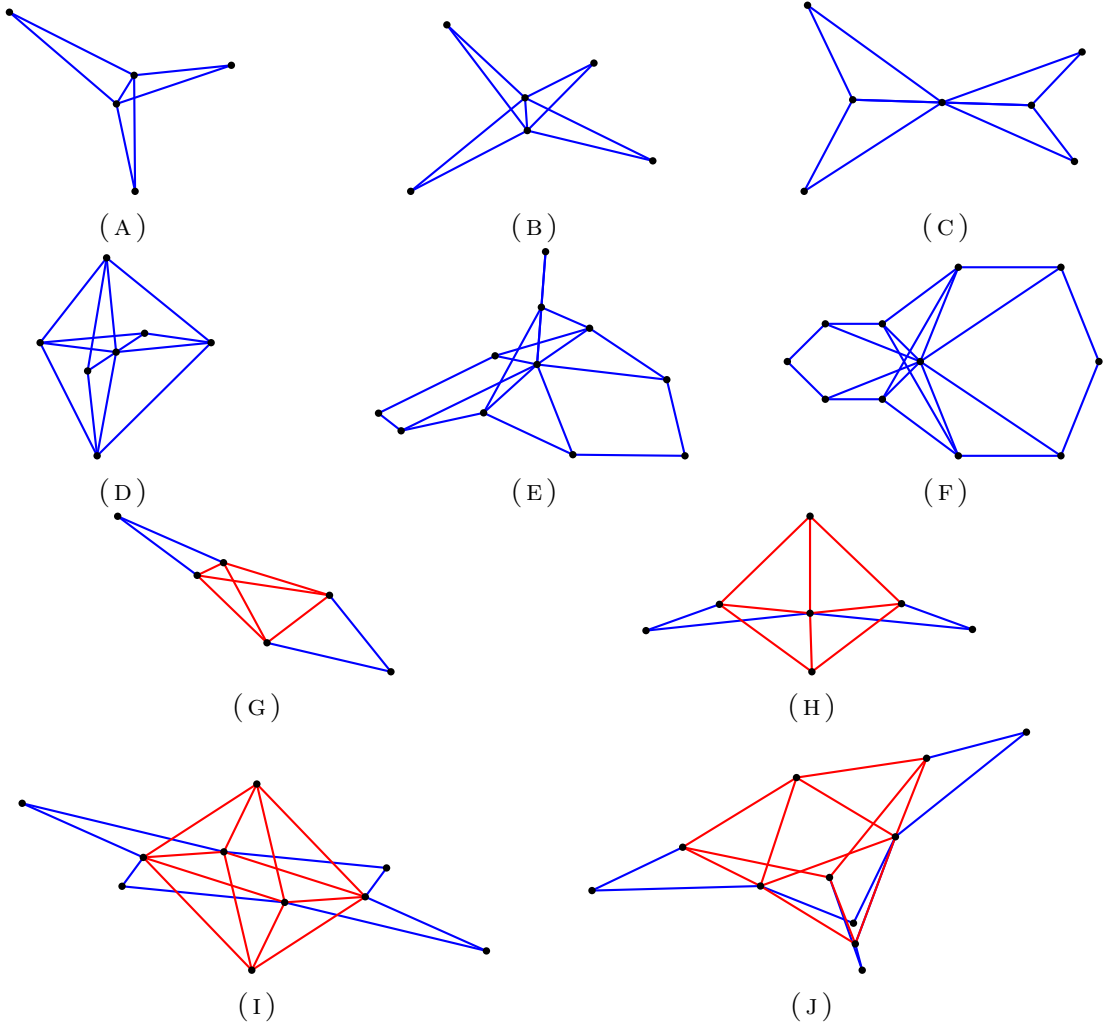
\begin{figure}
	\centering
	\begin{subfigure}[b]{0.32\textwidth}
		\centering
		% polymake for casabell
% Tue Feb 20 11:56:55 2024
% unnamed

\begin{tikzpicture}[x  = {(1cm,0cm)},
                    y  = {(0cm,1cm)},
                    z  = {(0cm,0cm)},
                    scale = 1,
                    color = {lightgray}]

  % DEF COORDINATES
  \coordinate (v0_unnamed__1) at (1.39576, 0.3225, 1.15478);
  \coordinate (v1_unnamed__1) at (-0.116135, -0.189765, 0.193677);
  \coordinate (v2_unnamed__1) at (0.116135, 0.189765, -0.193677);
  \coordinate (v3_unnamed__1) at (-1.52457, 1.02628, 0.0901583);
  \coordinate (v4_unnamed__1) at (0.128803, -1.34878, -1.24494);

  % VERTEXCOLOR
  \definecolor{vertexcolor_unnamed__1}{rgb}{ 0 0 0 }

  % DEF VERTEXSTYLES
  \tikzstyle{vertexstyle_unnamed__1} = [circle, scale=0.25pt, fill=vertexcolor_unnamed__1,]

  % EDGECOLOR
  \definecolor{edgecolor_unnamed__1}{rgb}{ 0 0 0 }
  \tikzstyle{edgestyle_unnamed__1} = [thick,color=blue]

  % EDGES

  \foreach \i/\k in {1/0,2/0,2/1,3/1,3/2,4/1,4/2} {
   \draw[edgestyle_unnamed__1] (v\i_unnamed__1) -- (v\k_unnamed__1);
  }

  % POINTS
  \foreach \i in {0,1,3,2,4} {
    \node at (v\i_unnamed__1) [vertexstyle_unnamed__1] {};
  }

\end{tikzpicture}
		\caption{}
	\end{subfigure}
	\hfill
	\begin{subfigure}[b]{0.32\textwidth}
		\centering
		% polymake for casabell
% Tue Feb 20 16:53:37 2024
% unnamed

\begin{tikzpicture}[x  = {(1cm,0cm)},
                    y  = {(0cm,1cm)},
                    z  = {(0cm,0cm)},
                    scale = 1.1,
                    color = {lightgray}]

  % DEF COORDINATES
  \coordinate (v0_unnamed__1) at (-0.0130873, 0.197271, 0.0788436);
  \coordinate (v1_unnamed__1) at (0.811345, 0.61588, 1.85744);
  \coordinate (v2_unnamed__1) at (-0.947673, 1.07676, -0.756232);
  \coordinate (v3_unnamed__1) at (1.5154, -0.562511, -1.29171);
  \coordinate (v4_unnamed__1) at (-1.38023, -0.92983, 0.189859);
  \coordinate (v5_unnamed__1) at (0.0142499, -0.197578, -0.0782049);

  % VERTEXCOLOR
  \definecolor{vertexcolor_unnamed__1}{rgb}{ 0 0 0 }

  % DEF VERTEXSTYLES
  \tikzstyle{vertexstyle_unnamed__1} = [circle, scale=0.25pt, fill=vertexcolor_unnamed__1,]

  % EDGECOLOR
  \definecolor{edgecolor_unnamed__1}{rgb}{ 0 0 0 }
  \tikzstyle{edgestyle_unnamed__1} = [thick,color=blue]

  % EDGES

  \foreach \i/\k in {1/0,2/0,3/0,4/0,5/0,5/1,5/2,5/3,5/4} {
   \draw[edgestyle_unnamed__1] (v\i_unnamed__1) -- (v\k_unnamed__1);
  }

  % POINTS
  \foreach \i in {1,4,0,5,2,3} {
    \node at (v\i_unnamed__1) [vertexstyle_unnamed__1] {};
  }

\end{tikzpicture}
		\caption{}
	\end{subfigure}
	\hfill
	\begin{subfigure}[b]{0.32\textwidth}
		\centering
		% polymake for casabell
% Tue Feb 20 11:56:54 2024
% unnamed

\begin{tikzpicture}[x  = {(1cm,0cm)},
                    y  = {(0cm,1cm)},
                    z  = {(0cm,0cm)},
                    scale = 0.9,
                    color = {lightgray}]

    \coordinate (v0_unnamed__1) at (-1.96676, 1.43295, 0.764997);
  \coordinate (v1_unnamed__1) at (-1.30633, 0.0407313, -0.0291552);
  \coordinate (v2_unnamed__1) at (0, 0, 0);
  \coordinate (v3_unnamed__1) at (-2.01612, -1.30877, -0.853889);
  \coordinate (v4_unnamed__1) at (2.0472, 0.744444, -1.32722);
  \coordinate (v5_unnamed__1) at (1.93568, -0.868631, 1.41611);
  \coordinate (v6_unnamed__1) at (1.30633, -0.0407313, 0.0291552);

  % VERTEXCOLOR
  \definecolor{vertexcolor_unnamed__1}{rgb}{ 0 0 0 }

  % DEF VERTEXSTYLES
  \tikzstyle{vertexstyle_unnamed__1} = [circle, scale=0.25pt, fill=vertexcolor_unnamed__1,]

  % EDGECOLOR
  \definecolor{edgecolor_unnamed__1}{rgb}{ 0 0 0 }
  \tikzstyle{edgestyle_unnamed__1} = [thick,color=blue]

  % EDGES

  \foreach \i/\k in {1/0,2/0,2/1,3/1,3/2,4/2,5/2,6/1,6/2,6/4,6/5} {
   \draw[edgestyle_unnamed__1] (v\i_unnamed__1) -- (v\k_unnamed__1);
  }

  % POINTS
  \foreach \i in {5,0,6,2,1,3,4} {
    \node at (v\i_unnamed__1) [vertexstyle_unnamed__1] {};
  }

\end{tikzpicture}
		\caption{}
	\end{subfigure}
	\begin{subfigure}[b]{0.32\textwidth}
		\centering
		% polymake for casabell
% Tue Feb 20 11:56:52 2024
% unnamed

\begin{tikzpicture}[scale = 0.25,
	color = {lightgray}]

	% DEF COORDINATES
	\coordinate (v0_unnamed__1) at (-0.5,5);
	\coordinate (v1_unnamed__1) at (1.5,1);
	\coordinate (v2_unnamed__1) at (-1,-5.5);
	\coordinate (v3_unnamed__1) at (-1.5,-1);
	\coordinate (v4_unnamed__1) at (5,0.5);
	\coordinate (v5_unnamed__1) at (0,0);
	\coordinate (v6_unnamed__1) at (-4,0.5);

	% VERTEXCOLOR
	\definecolor{vertexcolor_unnamed__1}{rgb}{ 0 0 0 }
	
	% DEF VERTEXSTYLES
	\tikzstyle{vertexstyle_unnamed__1_0} = [circle, scale=0.25, fill=vertexcolor_unnamed__1]
	\tikzstyle{vertexstyle_unnamed__1_1} = [circle, scale=0.25, fill=vertexcolor_unnamed__1]
	\tikzstyle{vertexstyle_unnamed__1_2} = [circle, scale=0.25, fill=vertexcolor_unnamed__1]
	\tikzstyle{vertexstyle_unnamed__1_3} = [circle, scale=0.25, fill=vertexcolor_unnamed__1]
	\tikzstyle{vertexstyle_unnamed__1_4} = [circle, scale=0.25, fill=vertexcolor_unnamed__1]
	\tikzstyle{vertexstyle_unnamed__1_5} = [circle, scale=0.25, fill=vertexcolor_unnamed__1]
	\tikzstyle{vertexstyle_unnamed__1_6} = [circle, scale=0.25, fill=vertexcolor_unnamed__1]

	% EDGECOLOR
	\definecolor{edgecolor_unnamed__1}{rgb}{ 0 0 1 }
	\tikzstyle{edgestyle_unnamed__1} = [thick,color=edgecolor_unnamed__1]
	
	% EDGES
	
	\foreach \i/\k in {0/3,0/4,0/5,0/6,1/4,1/5,1/6,2/3,2/4,2/5,2/6,3/5,4/5,5/6} {
		\draw[edgestyle_unnamed__1] (v\i_unnamed__1) -- (v\k_unnamed__1);
	}

	% POINTS
	\foreach \i in {0,1,2,3,4,5,6} {
		\node at (v\i_unnamed__1) [vertexstyle_unnamed__1_\i] {};
	}

\end{tikzpicture}
		\caption{}
	\end{subfigure}
	\hfill
	\begin{subfigure}[b]{0.32\textwidth}
		\centering
		% polymake for casabell
% Tue Feb 20 11:56:56 2024
% unnamed

\begin{tikzpicture}[x  = {(0cm,-1cm)},
                    y  = {(-1cm,0cm)},
                    z  = {(-0.75cm,0.5cm)},
                    xscale = -0.5,
                    yscale = 0.5,
                    rotate = 180,
                    color = {lightgray}]

  % DEF COORDINATES
  \coordinate (v0_unnamed__1) at (1.28308, -1.36686, -0.0554553);
  \coordinate (v1_unnamed__1) at (1.71349, -0.622164, 2.26696);
  \coordinate (v2_unnamed__1) at (-0.584482, -2.64421, -1.06245);
  \coordinate (v3_unnamed__1) at (1.19116, 0.875333, -1.35376);
  \coordinate (v4_unnamed__1) at (0.223707, 0.164358, -0.254224);
  \coordinate (v5_unnamed__1) at (1.08953, 1.09809, 4.06277);
  \coordinate (v6_unnamed__1) at (-3.01128, -2.51031, -1.87869);
  \coordinate (v7_unnamed__1) at (-0.347922, 1.95903, 2.12213);
  \coordinate (v8_unnamed__1) at (-2.6459, -0.0630142, -1.20728);
  \coordinate (v9_unnamed__1) at (2.13039, 1.56554, -2.42122);
  \coordinate (v10_unnamed__1) at (-1.04177, 1.5442, -0.218794);

  % VERTEXCOLOR
  \definecolor{vertexcolor_unnamed__1}{rgb}{ 0 0 0 }

  % DEF VERTEXSTYLES
  \tikzstyle{vertexstyle_unnamed__1} = [circle, scale=0.25pt, fill=vertexcolor_unnamed__1,]

  % EDGECOLOR
  \definecolor{edgecolor_unnamed__1}{rgb}{ 0 0 0 }
  \tikzstyle{edgestyle_unnamed__1} = [thick,color=blue]

  % EDGES

  \foreach \i/\k in {1/0,2/0,3/0,4/0,4/1,4/2,4/3,5/1,6/2,7/4,7/5,8/4,8/6,9/3,9/4,10/3,10/4,10/7,10/8} {
   \draw[edgestyle_unnamed__1] (v\i_unnamed__1) -- (v\k_unnamed__1);
  }

  % POINTS
  \foreach \i in {5,1,7,0,10,4,2,8,3,6,9} {
    \node at (v\i_unnamed__1) [vertexstyle_unnamed__1] {};
  }

\end{tikzpicture}
		\caption{}
	\end{subfigure}
	\hfill
	\begin{subfigure}[b]{0.32\textwidth}
		\centering
		% polymake for casabell
% Tue Feb 20 11:56:55 2024
% unnamed

\begin{tikzpicture}[scale = 0.5,
	color = {lightgray}]

	% DEF COORDINATES
	\coordinate (v0_unnamed__1) at (-1,1);
	\coordinate (v1_unnamed__1) at (-1,-1);
	\coordinate (v2_unnamed__1) at (0,0);
	\coordinate (v3_unnamed__1) at (-2.5,1);
	\coordinate (v4_unnamed__1) at (-3.5,0);
	\coordinate (v5_unnamed__1) at (-2.5,-1);
	\coordinate (v6_unnamed__1) at (1,-2.5);
	\coordinate (v7_unnamed__1) at (3.7,-2.5);
	\coordinate (v8_unnamed__1) at (3.7,2.5);
	\coordinate (v9_unnamed__1) at (1,2.5);
	\coordinate (v10_unnamed__1) at (4.7,0);

	% VERTEXCOLOR
	\definecolor{vertexcolor_unnamed__1}{rgb}{ 0 0 0 }
	
	% DEF VERTEXSTYLES
	\tikzstyle{vertexstyle_unnamed__1_0} = [circle, scale=0.25, fill=vertexcolor_unnamed__1]
	\tikzstyle{vertexstyle_unnamed__1_1} = [circle, scale=0.25, fill=vertexcolor_unnamed__1]
	\tikzstyle{vertexstyle_unnamed__1_2} = [circle, scale=0.25, fill=vertexcolor_unnamed__1]
	\tikzstyle{vertexstyle_unnamed__1_3} = [circle, scale=0.25, fill=vertexcolor_unnamed__1]
	\tikzstyle{vertexstyle_unnamed__1_4} = [circle, scale=0.25, fill=vertexcolor_unnamed__1]
	\tikzstyle{vertexstyle_unnamed__1_5} = [circle, scale=0.25, fill=vertexcolor_unnamed__1]
	\tikzstyle{vertexstyle_unnamed__1_6} = [circle, scale=0.25, fill=vertexcolor_unnamed__1]
	\tikzstyle{vertexstyle_unnamed__1_7} = [circle, scale=0.25, fill=vertexcolor_unnamed__1]
	\tikzstyle{vertexstyle_unnamed__1_8} = [circle, scale=0.25, fill=vertexcolor_unnamed__1]
	\tikzstyle{vertexstyle_unnamed__1_9} = [circle, scale=0.25, fill=vertexcolor_unnamed__1]
	\tikzstyle{vertexstyle_unnamed__1_10} = [circle, scale=0.25, fill=vertexcolor_unnamed__1]
	
	% EDGECOLOR
	\definecolor{edgecolor_unnamed__1}{rgb}{ 0 0 1 }
	\tikzstyle{edgestyle_unnamed__1} = [thick,color=edgecolor_unnamed__1]
	
	% EDGES
	
	\foreach \i/\k in {2/0,2/1,3/0,3/2,4/3,5/1,5/2,5/4,6/0,6/1,6/2,7/2,7/6,8/2,9/0,9/1,9/2,9/8,10/7,10/8} {
		\draw[edgestyle_unnamed__1] (v\i_unnamed__1) -- (v\k_unnamed__1);
	}

	% POINTS
	\foreach \i in {3,4,0,6,2,5,7,1,9,8,10} {
		\node at (v\i_unnamed__1) [vertexstyle_unnamed__1_\i] {};
	}

\end{tikzpicture}
		\caption{}
	\end{subfigure}
	\begin{subfigure}[b]{0.45\textwidth}
		\centering
		% polymake for casabell
% Tue Feb 20 11:56:54 2024
% unnamed

\begin{tikzpicture}[x  = {(0cm,0cm)},
                    y  = {(1cm,1cm)},
                    z  = {(1cm,0cm)},
                    scale = 1.1,
                    color = {lightgray}]

  % DEF COORDINATES
  \coordinate (v0_unnamed__1) at (0.469324, -0.937013, 2.57094);
  \coordinate (v1_unnamed__1) at (0.244159, -0.0150771, 0.914403);
  \coordinate (v2_unnamed__1) at (0.0577955, -0.58778, 0.739694);
  \coordinate (v3_unnamed__1) at (-0.444033, 0.377716, -0.745748);
  \coordinate (v4_unnamed__1) at (0.142078, 0.225142, -0.90835);
  \coordinate (v5_unnamed__1) at (-0.469324, 0.937013, -2.57094);

  % VERTEXCOLOR
  \definecolor{vertexcolor_unnamed__1}{rgb}{ 0 0 0 }

  % DEF VERTEXSTYLES
  \tikzstyle{vertexstyle_unnamed__1} = [circle, scale=0.25pt, fill=vertexcolor_unnamed__1,]

  % EDGECOLOR
  \definecolor{edgecolor_unnamed__1}{rgb}{ 0 0 0 }
  \tikzstyle{edgestyle_unnamed__1} = [thick,color=blue]
   \tikzstyle{edgestyle_unnamed__2} = [thick,color=red]

  % EDGES

  \foreach \i/\k in {1/0,2/0,5/3,5/4} {
   \draw[edgestyle_unnamed__1] (v\i_unnamed__1) -- (v\k_unnamed__1);
  }
  
    \foreach \i/\k in {2/1,3/1,3/2,4/1,4/2,4/3} {
  	\draw[edgestyle_unnamed__2] (v\i_unnamed__1) -- (v\k_unnamed__1);
  }

  % POINTS
  \foreach \i in {0,1,2,3,4,5} {
    \node at (v\i_unnamed__1) [vertexstyle_unnamed__1] {};
  }

\end{tikzpicture}
		\caption{}
	\end{subfigure}
	\begin{subfigure}[b]{0.45\textwidth}
		\centering
		% polymake for casabell
% Tue Feb 20 11:56:56 2024
% unnamed

\begin{tikzpicture}[x  = {(1cm,0cm)},
                    y  = {(0cm,1cm)},
                    z  = {(0cm,0cm)},
                    scale = 0.8, 
                    rotate = -36,
                    color = {lightgray}]

  % DEF COORDINATES
  \coordinate (v0_unnamed__1) at (-1.98293, -1.86514, -0.779692);
  \coordinate (v1_unnamed__1) at (-1.26215, -0.803069, 0.274948);
  \coordinate (v2_unnamed__1) at (0.030463, -0.0464459, -0.138812);
  \coordinate (v3_unnamed__1) at (-0.91512, 1.2551, 0.145107);
  \coordinate (v4_unnamed__1) at (0.62536, -0.81331, 1.17525);
  \coordinate (v5_unnamed__1) at (2.34938, 1.30642, -0.890134);
  \coordinate (v6_unnamed__1) at (1.15499, 0.966445, 0.213329);

  % VERTEXCOLOR
  \definecolor{vertexcolor_unnamed__1}{rgb}{ 0 0 0 }

  % DEF VERTEXSTYLES
  \tikzstyle{vertexstyle_unnamed__1} = [circle, scale=0.25pt, fill=vertexcolor_unnamed__1,]

  % EDGECOLOR
  \definecolor{edgecolor_unnamed__1}{rgb}{ 0 0 0 }
  \tikzstyle{edgestyle_unnamed__1} = [thick,color=blue]
    \tikzstyle{edgestyle_unnamed__2} = [thick,color=red]

  % EDGES

  \foreach \i/\k in {1/0,2/0,5/2,6/5} {
   \draw[edgestyle_unnamed__1] (v\i_unnamed__1) -- (v\k_unnamed__1);
  }
  
    \foreach \i/\k in {2/1,3/1,3/2,4/1,4/2,6/2,6/3,6/4} {
  	\draw[edgestyle_unnamed__2] (v\i_unnamed__1) -- (v\k_unnamed__1);
  }

  % POINTS
  \foreach \i in {4,1,6,3,2,0,5} {
    \node at (v\i_unnamed__1) [vertexstyle_unnamed__1] {};
  }

\end{tikzpicture}
		\caption{}
	\end{subfigure}
	\begin{subfigure}[b]{0.45\textwidth}
		\centering
		% polymake for casabell
% Tue Feb 20 11:56:53 2024
% unnamed

\begin{tikzpicture}[x  = {(1cm,0cm)},
                    y  = {(0cm,1cm)},
                    z  = {(0.5cm,-0.5cm)},
                    scale = 0.9,
                    color = {lightgray}]

  % DEF COORDINATES
  \coordinate (v0_unnamed__1) at (-3.33728, 1.03114, -0.112738);
  \coordinate (v1_unnamed__1) at (-1.14109, -0.191281, -0.962807);
  \coordinate (v2_unnamed__1) at (-0.89173, 0.819343, 0.894069);
  \coordinate (v3_unnamed__1) at (-0.348499, -1.05966, 0.623526);
  \coordinate (v4_unnamed__1) at (0.348498, 1.05966, -0.623526);
  \coordinate (v5_unnamed__1) at (-0.409256, -1.65919, -3.04844);
  \coordinate (v6_unnamed__1) at (0.409258, 1.65919, 3.04844);
  \coordinate (v7_unnamed__1) at (0.89173, -0.819343, -0.894069);
  \coordinate (v8_unnamed__1) at (1.14109, 0.191281, 0.962807);
  \coordinate (v9_unnamed__1) at (3.33728, -1.03114, 0.112737);

  % VERTEXCOLOR
  \definecolor{vertexcolor_unnamed__1}{rgb}{ 0 0 0 }

  % DEF VERTEXSTYLES
  \tikzstyle{vertexstyle_unnamed__1} = [circle, scale=0.25pt, fill=vertexcolor_unnamed__1,]

  % EDGECOLOR
  \definecolor{edgecolor_unnamed__1}{rgb}{ 0 0 0 }
  \tikzstyle{edgestyle_unnamed__1} = [thick,color=blue]
  \tikzstyle{edgestyle_unnamed__2} = [thick,color=red]

  % EDGES

  \foreach \i/\k in {1/0,2/0,5/1,6/2,7/5,8/6,9/7,9/8} {
   \draw[edgestyle_unnamed__1] (v\i_unnamed__1) -- (v\k_unnamed__1);
  }
  
  \foreach \i/\k in {2/1,3/1,3/2,4/1,4/2,7/1,7/3,7/4,8/2,8/3,8/4,8/7} {
  	\draw[edgestyle_unnamed__2] (v\i_unnamed__1) -- (v\k_unnamed__1);
  }

  % POINTS
  \foreach \i in {6,8,2,3,9,0,4,7,1,5} {
    \node at (v\i_unnamed__1) [vertexstyle_unnamed__1] {};
  }

\end{tikzpicture}
		\caption{}
	\end{subfigure}
	\begin{subfigure}[b]{0.45\textwidth}
		\centering
		% polymake for casabell
% Tue Feb 20 11:56:57 2024
% unnamed

\begin{tikzpicture}[x  = {(1cm,0cm)},
                    y  = {(0cm,1cm)},
                    z  = {(0cm,0cm)},
                    scale = 0.8,
                    rotate = 180,
                    color = {lightgray}]

  % DEF COORDINATES
  \coordinate (v0_unnamed__1) at (-0.0656107, 0.314487, 2.42653);
  \coordinate (v1_unnamed__1) at (2.34921, -0.18671, 0.129519);
  \coordinate (v2_unnamed__1) at (-1.66312, -1.66117, 0.212124);
  \coordinate (v3_unnamed__1) at (-0.603481, 1.85135, 3.73289);
  \coordinate (v4_unnamed__1) at (0.477091, -1.34081, -0.759276);
  \coordinate (v5_unnamed__1) at (3.84053, 0.531654, -1.0017);
  \coordinate (v6_unnamed__1) at (-0.49001, 1.41562, 1.46705);
  \coordinate (v7_unnamed__1) at (-3.3046, -2.09404, -0.8546);
  \coordinate (v8_unnamed__1) at (1.06946, 0.456336, -1.10416);
  \coordinate (v9_unnamed__1) at (-1.1497, -0.359162, -1.05847);
  \coordinate (v10_unnamed__1) at (-0.459779, 1.07245, -3.18991);

  % VERTEXCOLOR
  \definecolor{vertexcolor_unnamed__1}{rgb}{ 0 0 0 }

  % DEF VERTEXSTYLES
  \tikzstyle{vertexstyle_unnamed__1} = [circle, scale=0.25pt, fill=vertexcolor_unnamed__1,]

  % EDGECOLOR
  \definecolor{edgecolor_unnamed__1}{rgb}{ 0 0 0 }
  \tikzstyle{edgestyle_unnamed__1} = [thick,color=blue]
  \tikzstyle{edgestyle_unnamed__2} = [thick,color=red]

  % EDGES

  \foreach \i/\k in {5/1,0/3,6/3,7/2,8/5,9/6,9/7,10/8,10/9} {
   \draw[edgestyle_unnamed__1] (v\i_unnamed__1) -- (v\k_unnamed__1);
  }
  
  \foreach \i/\k in {1/0,2/0,4/1,4/2,6/0,8/1,8/4,8/6,9/2,9/4,9/6,9/8} {
  	\draw[edgestyle_unnamed__2] (v\i_unnamed__1) -- (v\k_unnamed__1);
  }

  % POINTS
  \foreach \i in {3,0,6,2,1,4,7,5,9,8,10} {
    \node at (v\i_unnamed__1) [vertexstyle_unnamed__1] {};
  }

\end{tikzpicture}
		\caption{}
	\end{subfigure}
	\caption{Tight spans of the non-split coarsest subdivisions of the hypersimplex $\Delta(2,6)$.
          The top six are $2$-dimensional, while the bottom four are $3$-dimensional.
          The $3$-dimensional cases feature a unique $3$-cell, whose edges are marked red
        }
	\label{fig:Delta26:tightspans}
\end{figure}

Let us illustrate the use of Proposition~\ref{prop:dg:contraction} beyond complete graphs by example.
Figure~\ref{fig:Delta26:tightspans} displays (the graphs of) the tight spans of regular subdivisions of the hypersimplex $\Delta(2,6)$, as defined in Section \ref{sec:hypersimplices} below.
Take the graph (A) in the first row.
No edge can be contracted without producing multiedges and the only way to arrive at a simple graph by contraction is to contract everything.
Hence this is the tight span of a coarsest subdivision, even though the graph is not a complete graph.
The graph (C) in the first row is different.
It has two distinguished halves and it is possible to arrive at a simple graph by contracting just one of these halves, leaving the other half untouched.
From Proposition~\ref{prop:dg:contraction} alone we cannot detect that the subdivision is coarsest.

\section{Subdivisions of Hypersimplices}
\label{sec:hypersimplices}
Our main objects of study are regular subdivisions of the hypersimplex $\Delta(k,n)$, where $1\leq k \leq n$.
The latter is the polytope
\[
  \conv\bigl\{ e_X \bigmid X\in \tbinom{[n]}{k} \bigr\} \ = \ \bigl[0,1\bigr]^n \cap \bigl\{ x\in\RR^n \bigmid \sum x_i = k \bigr\} \enspace ,
\]
where $e_X = \sum_{i\in X} e_i$, and $e_1,e_2,\dots,e_n$ are the standard basis vectors in $\RR^n$.
Further, $\tbinom{[n]}{k}$ is the set of $k$-element subsets of $[n]=\{1,2,\dots,n\}$.
The hypersimplices live in codimension one; to make them full-dimensional, as we had assumed in the previous section, we can project out, e.g., the last coordinate.

The hypersimplices are \emph{lattice polytopes}, i.e., their vertices have integral coordinates.
Exchanging the zero and the one coordinates induces a \emph{lattice equivalence} between $\Delta(k,n)$ and $\Delta(n-k,n)$.
That is to say, there is an affine isomorphism from one polytope to the other which preserves the integer lattice.
The hypersimplex $\Delta(1,n)\cong\Delta(n-1,n)$ is an $(n{-}1)$-dimensional simplex.

\begin{example}
  \label{exmp:Delta24}
  The hypersimplex $\Delta(2,4)$ is lattice equivalent to the regular octahedron, embedded in $\RR^4$.
  It has three (regular) triangulations, each of which comprises four tetrahedra around an edge spanning two antipodal vertices.
  Modulo linealities, the secondary fan is the normal fan of a triangle, and the three rays correspond to the three splits of $\Delta(2,4)$ into pairs of Egyptian pyramids.
%  These three rays form the Dressian $\Dr(2,4)$.
\end{example}

Hypersimplices and their subdivisions occur naturally in tropical geometry.
Here we sketch how.
Let $\omega$ be a lifting function on $\Delta(k,n)$.
Then the vertex-edge graph of the regular subdivision $\Delta(k,n)^\omega$ contains the vertex-edge graph of $\Delta(k,n)$ as a subgraph.
Now the function $\omega$ is a \emph{$(k,n)$-uniform tropical Plücker vector} if and only if these two graphs coincide.
For experts in tropical geometry this is a known result; see, e.g., \cite[Theorem 10.36]{ETC}.
Readers who are new to this topic may read the above as a definition.
The tight span of the regular subdivision induced by a tropical Plücker vector is a \emph{tropical linear space}, up to minor boundary effects; see \cite[Example 10.56]{ETC}.
The \emph{Dressian} $\Dr(k,n)$ is the subfan of the secondary fan $\Sigma(k,n)$ comprised of the $(k,n)$-uniform tropical Plücker vectors.
For instance, the Dressian $\Dr(2,4)$ is comprised of the three splits of $\Delta(2,4)$; see Example~\ref{exmp:Delta24}.
The Dressian contains the \emph{tropical Grassmannian} $\TGr(k,n)$ of tropicalized linear spaces, as a subset but not necessarily as a subfan.

The second result which is crucial for our present work is a bound on the combinatorial complexity of tropical linear spaces.
\begin{theorem}[{Speyer \cite[Theorem~6.1]{Speyer:2008}}]
  \label{thm:spread}
  Let $\omega$ be a lifting function of $\Delta(k,n)$.
  If $\omega$ is contained in the Dressian $\Dr(k,n)$, then the spread of $\Delta(k,n)^\omega$ is at most $\tbinom{n-2}{k-1}$.
\end{theorem}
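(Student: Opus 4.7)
The plan is to prove Speyer's bound by induction on $n$, via matroid deletion and contraction. Pick any index $i\in[n]$ and consider the two restrictions of the subdivision to the opposite facets $F_0=\Delta(k,n)\cap\{x_i=0\}\cong\Delta(k,n-1)$ and $F_1=\Delta(k,n)\cap\{x_i=1\}\cong\Delta(k-1,n-1)$. Each maximal cell of $\Delta(k,n)^\omega$ is a matroid polytope $P_M$ for some rank-$k$ matroid $M$ on $[n]$, and we classify it by the role of $i$ in $M$: if $i$ is a loop then $P_M\subset F_0$, if $i$ is a coloop then $P_M\subset F_1$, and otherwise $P_M$ has facets in both $F_0$ (namely $P_{M\setminus i}$) and $F_1$ (namely $e_i+P_{M/i}$). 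These facets are maximal cells of the restricted subdivisions on $F_0$ and $F_1$, and conversely every such maximal cell arises this way.

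A quick counting argument now yields the key recursion. Writing $L,C,N$ for the number of maximal cells of $\Delta(k,n)^\omega$ with $i$ a loop, coloop, or neither, the spread equals $L+C+N$. The restriction to $F_0$ has exactly $L+N$ maximal cells and the restriction to $F_1$ has $C+N$, so
\[
  \text{spread}\bigl(\Delta(k,n)^\omega\bigr) \ \leq \ \text{spread}\bigl(\Delta(k,n-1)^{\omega_0}\bigr) + \text{spread}\bigl(\Delta(k-1,n-1)^{\omega_1}\bigr),
\]
where $\omega_0$ and $\omega_1$ denote the restrictions of $\omega$ to $\binom{[n]\setminus\{i\}}{k}$ and to $\{X\in\binom{[n]}{k} \mid i\in X\}\cong\binom{[n]\setminus\{i\}}{k-1}$, respectively. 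To apply induction I need both $\omega_0\in\Dr(k,n-1)$ and $\omega_1\in\Dr(k-1,n-1)$; this is a standard verification that the tropical Plücker three-term relations survive deletion and contraction, since each relation for the restricted vector is literally a relation for $\omega$ in which the fixed element $i$ plays a passive role in each Plücker index.

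With the recursion and the base cases in hand (for $n=k+1$ the hypersimplex is a simplex, so the spread is $1=\tbinom{n-2}{k-1}$, and the cases $k=1$ or $k=n-1$ are analogous), the induction closes immediately by Pascal's identity
\[
  \tbinom{n-3}{k-1} + \tbinom{n-3}{k-2} \ = \ \tbinom{n-2}{k-1}.
\]
The main obstacle I expect is not the counting but the careful verification that restricting a tropical Plücker vector to either facet yields again a tropical Plücker vector on the smaller hypersimplex, and that the induced subdivisions of $F_0$ and $F_1$ really are the regular subdivisions associated with $\omega_0$ and $\omega_1$ rather than merely coarsenings of them. Once that bookkeeping is pinned down, the bound $\tbinom{n-2}{k-1}$ drops out of the induction with nothing further to check.
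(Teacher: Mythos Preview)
The paper does not supply its own proof of this statement; it is quoted from Speyer without argument. So there is no paper-proof to compare against, and I will assess your proposal on its own.

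Your inductive plan via deletion/contraction and Pascal's identity is the right shape, but the counting step is broken in two places. First, the trichotomy is vacuous for maximal cells: a maximal cell $P_M$ of $\Delta(k,n)^\omega$ has dimension $n-1$, so it cannot sit inside the $(n{-}2)$-dimensional facet $F_0$ or $F_1$. Hence $i$ is \emph{never} a loop or coloop of $M$, and $L=C=0$ always. Second, and this is the real gap, your assertion that $P_{M\setminus i}$ is a maximal cell of the restricted subdivision on $F_0$ is false in general: $P_{M\setminus i}$ is full-dimensional in $F_0$ precisely when $M\setminus i$ is connected, and connectedness of $M$ does not force this. Concretely, take the split of the octahedron $\Delta(2,4)$ through the diagonal $\{e_1+e_2,\,e_3+e_4\}$ and set $i=1$: for the pyramid with apex $e_1+e_2$ the deletion $M\setminus 1$ has $2$ as a coloop, and $P_{M\setminus 1}$ is only an edge in $F_0$. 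Here $L=C=0$, $N=2$, yet each of $F_0,F_1$ carries a single maximal cell, not the $L+N=C+N=2$ your formula predicts. So the identity ``restriction to $F_0$ has exactly $L+N$ maximal cells'' is simply wrong, and contrary to what you write at the end, the counting is where the difficulty lies, not the Pl\"ucker bookkeeping.

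What rescues the recursion is a genuine matroid fact you are missing: if $M$ is connected with at least two elements, then for every $e\in E(M)$ at least one of $M\setminus e$ and $M/e$ is connected (a result of Tutte; see e.g.\ Oxley, \emph{Matroid Theory}). This guarantees that every maximal cell $P_M$ restricts to a full-dimensional cell on at least one of $F_0,F_1$; and since those cells lie on the boundary of $\Delta(k,n)$, the restriction maps are injective where defined. That gives
\[
  \text{spread}\bigl(\Delta(k,n)^\omega\bigr)\ \le\ \text{spread}\bigl(\Delta(k,n-1)^{\omega_0}\bigr)+\text{spread}\bigl(\Delta(k-1,n-1)^{\omega_1}\bigr),
\]
after which your Pascal induction closes. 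Without invoking Tutte's theorem (or an equivalent), the inequality does not follow from your argument.
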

Speyer's result above has an interesting direct consequence for the rays of the tropical Grassmannian.
%  maxspread(k,n) = binomial(n-2,k-1) - (k-1)*(n-k-1) + 1
\begin{corollary}\label{cor:spread}
  Let $\omega$ be a ray of the tropical Grassmannian $\TGr(k,n)$.
  Then the spread of $\Delta(k,n)^\omega$ is at most $\tbinom{n-2}{k-1}-(k-1)\cdot(n-k-1)+1$.
\end{corollary}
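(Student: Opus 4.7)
The plan is to extend the ray $\omega$ to a maximal cone of $\TGr(k,n)$, apply Speyer's bound at a generic point, and then argue that the spread drops by at least $(k-1)(n-k-1)-1$ as we specialize back to $\omega$. Since $\TGr(k,n)$ is a pure polyhedral fan of dimension $(k-1)(n-k-1)$ modulo its $n$-dimensional lineality, the ray $\omega$ sits in at least one maximal cone $\sigma$; fix such a $\sigma$ and pick $\omega'\in\operatorname{relint}(\sigma)$. Then $\omega'\in\TGr(k,n)\subseteq\Dr(k,n)$, so Theorem~\ref{thm:spread} gives $\text{spread}(\Delta(k,n)^{\omega'})\leq\binom{n-2}{k-1}$.

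The initial ideal of the Plücker ideal is constant on $\operatorname{relint}(\sigma)$, which forces the induced matroid subdivision of $\Delta(k,n)$ to be constant there as well. Hence $\sigma$ is entirely contained in the secondary cone $C':=\seccone(\Delta(k,n)^{\omega'})$ of $\Sigma(k,n)$, so $\dim C'\geq(k-1)(n-k-1)$ modulo lineality. Since $\omega$ lies on the boundary of $\sigma$, the subdivision $\Delta(k,n)^\omega$ is a proper coarsening of $\Delta(k,n)^{\omega'}$, which means that $C:=\seccone(\Delta(k,n)^\omega)$ is a proper face of $C'$. A saturated chain of faces $C=C_0\subsetneq C_1\subsetneq\dots\subsetneq C_t=C'$ induces a chain of regular subdivisions $\Delta(k,n)^\omega=S_0\prec\dots\prec S_t=\Delta(k,n)^{\omega'}$ with each $S_i$ a strict refinement of $S_{i-1}$, and each such strict refinement strictly increases the spread, giving
\[
  \text{spread}(\Delta(k,n)^{\omega'}) \;\geq\; \text{spread}(\Delta(k,n)^\omega) + (\dim C' - \dim C).
\]
Assuming $\dim C = 1$ modulo lineality we conclude $\text{spread}(\Delta(k,n)^{\omega'}) \geq \text{spread}(\Delta(k,n)^\omega) + (k-1)(n-k-1) - 1$, and the corollary follows upon invoking Speyer's bound on $\text{spread}(\Delta(k,n)^{\omega'})$.

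The main obstacle is justifying $\dim C = 1$ modulo lineality, i.e.\ that a ray of the Gröbner-fan structure on $\TGr(k,n)$ is also a ray of the secondary fan $\Sigma(k,n)$. Since those two fan structures on their common support are a priori incompatible, this needs a dedicated argument. A more robust alternative is to bypass $C$ and work directly with the tight span $L_\omega$, i.e.\ the tropical linear space associated to $\omega$: whenever the Plücker vector crosses a codimension-one wall of $\sigma$ inside $\TGr(k,n)$, at least one vertex of $L_\omega$ is contracted, so counting the $(k-1)(n-k-1)-1$ walls separating the ray spanned by $\omega$ from $\operatorname{relint}(\sigma)$ yields the required drop in the spread without any recourse to $\Sigma(k,n)$.
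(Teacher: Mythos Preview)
Your strategy coincides with the paper's: pick a maximal cone of $\TGr(k,n)$ through the ray $\omega$, invoke Speyer's bound at a generic point of that cone, and count the drop in spread along a flag of length $(k-1)(n-k-1)-1$. The paper runs the flag inside $\TGr(k,n)$ and simply asserts that ``the spread of the subdivisions of the flag increases by at least one in each step,'' without further justification.

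The obstacle you isolate is therefore real, and it is precisely the step the paper's own proof leaves unjustified. Your reformulation via $\dim C=1$ (a ray of the Gr\"obner structure also being a ray of $\Sigma(k,n)$) and the paper's implicit claim (distinct Gr\"obner cones in a flag yield strictly refined matroid subdivisions) are two versions of the same compatibility question between the Gr\"obner and secondary fan structures on $\TGr(k,n)$---a compatibility the paper itself elsewhere notes is ``known \emph{not} to be compatible in general.'' Your observation that a constant initial ideal forces a constant matroid subdivision is reasonable (the subdivision is governed by the initial forms of the three-term Pl\"ucker relations) and does give $\sigma\subseteq C'$; but it does not give $\dim C=1$, nor does it rescue the step-by-step spread increase along a Gr\"obner flag, since it only shows the Gr\"obner structure \emph{refines} the secondary one, not that the refinement is trivial. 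Your fallback via ``crossing walls'' of the tight span is too vague to close the gap: the assertion that each Gr\"obner wall-crossing contracts at least one vertex of the tropical linear space is again the same unproved compatibility statement in disguise.
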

\begin{proof}
  The tropical Grassmannian $\TGr(k,n)$, modulo its lineality space, is a polyhedral fan of pure dimension $(k-1)\cdot(n-k-1)$ (see \cite[Corollary 3.1]{SpeyerSturmfels04} and \cite[\S4.3]{Tropical+Book}).
  Any maximal flag starting with the ray $\omega$ gives rise to a sequence of regular subdivisions.
  The claim now follows from Theorem~\ref{thm:spread} because all maximal cones of $\TGr(k,n)$ share the same dimension, and since the spread of the subdivisions of the flag increases by at least one in each step.
\end{proof}
Plugging in $k=2$ into the bound above yields $2$, for any $n\geq 4$.
So the corollary generalizes the known fact that each ray of the tropical Grassmannian $\TGr(2,n)$ is a split.
In particular, the given bound is tight.

The symmetric group $\Sym(n)$ acts transitively on the vertices of the hypersimplex $\Delta(k,n)$ for arbitrary $1\leq k \leq n$.
For $n=2k$ there is an additional involution which exchanges zeros with ones coordinatewise.
In that case the full automorphism group is isomorphic to the semidirect product $\Sym(n) \rtimes \ZZ/2$.
The action is (affinely) linear, whence it induces an action on the set of all (regular) subdivisions.

\begin{example}
  \label{exmp:Delta25}
  Modulo linealities, the Dressian $\Dr(2,5)$ is a two-dimensional fan with ten rays and 15 maximal cones.
  Its intersection with the unit sphere is isomorphic to the Petersen graph, seen as a one-dimensional polytopal complex \cite[Figure 10.7]{ETC}.
  The secondary fan $\Sigma(2,5)$ has another ten rays; see Figure~\ref{fig:Dr25}.
  Up to lineality, the dimension of the secondary fan equals five.
  This partition of the rays of $\Sigma(2,5)$ into splits and non-splits agrees with the partition into $\Sym(5)$-orbits.
  Each non-split ray has spread five.
  Since that value exceeds Speyer's bound $\tbinom{5-2}{2-1}=3$ from Theorem~\ref{thm:spread} it is clear that the non-splits cannot lie in $\Dr(2,5)$.
  Of course, Corollary~\ref{cor:spread} applies, too, as $\TGr(2,5)=\Dr(2,5)$.
  See also Example~\ref{exmp:Delta25-rays} below.
\end{example}

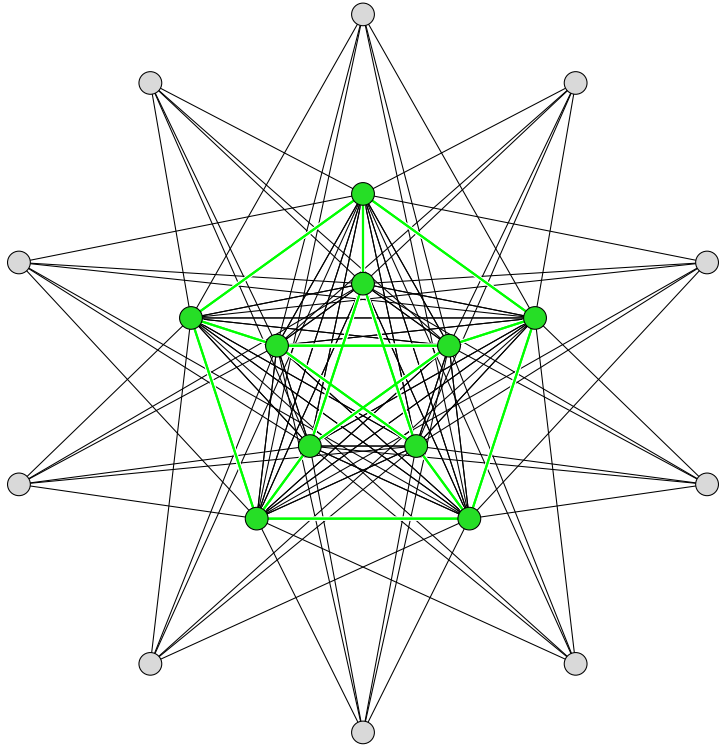
\begin{figure}
  \centering

  \begin{tikzpicture}[x={(1.7cm,0cm)},y={(0cm,1.7cm)},
	secondary/.style={circle,minimum size=3mm,fill=black!15,draw},
        dressian/.style={circle,minimum size=3mm,fill=black!15!green!85,draw}, scale=0.7] 
        % rectangle,minimum size=6mm,rounded corners=3mm,fill=white,draw}]        ]
	
	\coordinate (p0) at (0*72+18:2);
	\coordinate (p4) at (1*72+18:2);
	\coordinate (p1) at (2*72+18:2);
	\coordinate (p5) at (3*72+18:2);
	\coordinate (p3) at (4*72+18:2);
	\coordinate (p13) at (0*72+18:1);
	\coordinate (p12) at (1*72+18:1);
	\coordinate (p7) at (2*72+18:1);
	\coordinate (p14) at (3*72+18:1);
	\coordinate (p6) at (4*72+18:1);

	\coordinate (p2) at (6*36+18:4);
	\coordinate (p8) at (2*36+18:4);
	\coordinate (p9) at (0*36+18:4);
	\coordinate (p10) at (8*36+18:4);
	\coordinate (p11) at (4*36+18:4);
	\coordinate (p15) at (9*36+18:4);
	\coordinate (p16) at (1*36+18:4);
	\coordinate (p17) at (3*36+18:4);
	\coordinate (p18) at (5*36+18:4);
	\coordinate (p19) at (7*36+18:4); 
	
	%adjacency lists are taken from polymake
	
	\draw \foreach \i in {1, 2, 5, 6, 7, 8, 10, 11, 12, 14, 15, 16} {(p0) -- (p\i) }; 
	\draw \foreach \i in {0, 2, 3, 6, 8, 9, 10, 12, 13, 14, 17, 18}{(p1) -- (p\i) };
	\draw \foreach \i in {1, 2, 4, 7, 8, 9, 11, 12, 13, 14, 15, 19}{(p3) -- (p\i) };
	\draw \foreach \i in {2, 3, 5, 6, 7, 9, 10, 11, 13, 14, 16, 17}{(p4) -- (p\i) };
	\draw \foreach \i in {0, 4, 6, 7, 8, 9, 10, 11, 12, 13, 18, 19}{(p5) -- (p\i) };
	\draw \foreach \i in {0, 1, 2, 4, 5, 9, 13, 14, 15, 16, 18, 19}{(p6) -- (p\i) };
	\draw \foreach \i in {0, 2, 3, 4, 5, 8, 12, 14, 16, 17, 18, 19}{(p7) -- (p\i) };
	\draw \foreach \i in {0, 1, 3, 5, 7, 9, 11, 13, 15, 16, 17, 18}{(p12) -- (p\i) };
	\draw \foreach \i in {1, 3, 4, 5, 6, 8, 10, 12, 15, 16, 17, 19}{(p13) -- (p\i) };
	\draw \foreach \i in {0, 1, 3, 4, 6, 7, 10, 11, 15, 17, 18, 19}{(p14) -- (p\i) };

	\draw[white, ultra thick] \foreach \i in {3, 4, 13} {(p0) -- (p\i) };
	\draw[white, ultra thick]  \foreach \i in {4, 5, 7} {(p1) -- (p\i) };
	\draw[white, ultra thick]  \foreach \i in {0, 5, 6} {(p3) -- (p\i) };
	\draw[white, ultra thick]  \foreach \i in {0, 1, 12} {(p4) -- (p\i) };
	\draw[white, ultra thick]  \foreach \i in {1, 3, 14} {(p5) -- (p\i) };
	\draw[white, ultra thick] \foreach \i in {3, 7, 12} {(p6) -- (p\i) };
	\draw[white, ultra thick] \foreach \i in {1, 6, 13} {(p7) -- (p\i) };
	\draw[white, ultra thick] \foreach \i in {4, 6, 14} {(p12) -- (p\i) };
	\draw[white, ultra thick] \foreach \i in {0, 7, 14} {(p13) -- (p\i) };
	\draw[white, ultra thick] \foreach \i in {5, 12, 13} {(p14) -- (p\i) };

        \draw[green, thick] \foreach \i in {3, 4, 13} {(p0) -- (p\i) };
	\draw[green, thick]  \foreach \i in {4, 5, 7} {(p1) -- (p\i) };
	\draw[green, thick]  \foreach \i in {0, 5, 6} {(p3) -- (p\i) };
	\draw[green, thick]  \foreach \i in {0, 1, 12} {(p4) -- (p\i) };
	\draw[green, thick]  \foreach \i in {1, 3, 14} {(p5) -- (p\i) };
	\draw[green, thick] \foreach \i in {3, 7, 12} {(p6) -- (p\i) };
	\draw[green, thick] \foreach \i in {1, 6, 13} {(p7) -- (p\i) };
	\draw[green, thick] \foreach \i in {4, 6, 14} {(p12) -- (p\i) };
	\draw[green, thick] \foreach \i in {0, 7, 14} {(p13) -- (p\i) };
	\draw[green, thick] \foreach \i in {5, 12, 13} {(p14) -- (p\i) };

	\tiny
	
	\foreach \i in {2,8,9,10,11,15,16,17,18,19} \node[secondary] at (p\i) {}; % {\i};
        \foreach \i in {0,1,3,4,5,6,7,12,13,14} \node[dressian] at (p\i) {}; % {\i};
	
\end{tikzpicture}

% Local Variables: 
% mode: latex
% TeX-master: "paper"
% mode: TeX-PDF
% buffer-file-coding-system:utf-8-unix
% End: 

  \caption{Dressian $\Dr(2,5)$ embedded into (the graph of) $\Sigma(2,5)$}
  \label{fig:Dr25}
\end{figure}

\section{Volumes and Eulerian Numbers}
\label{sec:eulerian}

Our goal is to identify and study coarsest subdivisions of the hypersimplices $\Delta(k,n)$, for $2\leq k \leq n-2$.
The splits are known; there are precisely $\lfloor n/2 \rfloor-1$ orbits with respect to the $\Sym(n)$ action; see also \cite[Theorem 5.3]{HerrmannJoswig:2008}.
Furthermore, Herrmann constructed \enquote{3-splits} of the hypersimplices, which also form rays of the Dressians \cite[Theorem 6.5]{MR2739494}.
In our analysis we will encounter the Eulerian numbers, which are known to express the volumes of the hypersimplices \cite{Foata:1977,Stanley:1977}.

\begin{remark}\label{rem:ordering}
  In the sequel we will assume that the $\tbinom{n}{k}$ vertices of $\Delta(k,n)$ are in descending lexicographic order.
  For instance, the vertices of $\Delta(2,4)$ are ordered $1100$, $1010$, $1001$, $0110$, $0101$, $0011$. Denote by $V_{k,n}$ the vertices of $\Delta(k,n)$ in our ordering. 
\end{remark}

For $1\leq i \leq \tbinom{n}{k}$ we define
\begin{equation}\label{lambda}
  \lambda_{k,n}(i) \ = \
  \begin{cases}
    1 & 1 \leq i\leq n-k;\\
    0 & \text{otherwise,}
  \end{cases}
\end{equation}
which yields a lifting function on $\Delta(k,n)$.
For instance, $\lambda_{2,4}(1100)=\lambda_{2,4}(1010)=1$, and $\lambda_{2,4}$ is zero on the remaining four vertices of $\Delta(2,4)$.
We abbreviate $\lambda=\lambda_{k,n}$ if the parameters are obvious from the context.
Throughout we assume $2\leq k \leq n-2$.
The tight span of the subdivision induced by $\lambda_{2,6}$ is shown in Figure~\ref{fig:Delta26:tightspans}(B).

% \begin{figure}\centering
%   \includegraphics[width=0.5\textwidth]{img/136.pdf}
%   \caption{Tight span of $\Delta(2,6)^\lambda$; its six vertices are dual to the six maximal cells of $\Delta(2,6)^\lambda$}
%   \label{fig:lambda26}
% \end{figure}

\begin{proposition}
  \label{prop:lambda}
  The regular subdivision $\Delta(k,n)^\lambda$ is coarsest.
\end{proposition}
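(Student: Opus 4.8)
The plan is to determine the subdivision $S:=\Delta(k,n)^\lambda$ cell by cell and then deduce coarsest-ness from the combinatorics of its dual graph, using Proposition~\ref{prop:dg:contraction} together with Lemma~\ref{lemma:dg:simple}; the dual graph will turn out not to be complete, so Corollary~\ref{cor:coarsest} does not apply directly.

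Since $\lambda$ is $0/1$-valued, the piecewise-linear convex function $g\colon\Delta(k,n)\to\RR$ whose graph is the lower hull of the lift has the transparent form $g(x)=\min\bigl\{\,\textstyle\sum_i c_i\ :\ x=\sum_i c_i v_i,\ \sum_i c_i=1,\ c_i\ge 0\,\bigr\}$, where the objective sums the weights placed on the $n-k$ vertices of height $1$, namely $e_{\{1,\dots,k-1,j\}}$ for $k\le j\le n-1$; the maximal cells of $S$ are the linearity regions of $g$. First I would write down these cells explicitly: the \emph{bottom cell} $Z=\conv(\lambda^{-1}(0))$ (the face of the lifted polytope at height $0$, which one checks is full-dimensional), a second distinguished cell $Z'$ whose vertex set consists of all $n-k$ lifted vertices together with a few more of the first vertices in the ordering of Remark~\ref{rem:ordering}, and one further maximal cell attached to each of the $n-k$ lifted vertices, so that $S$ has spread $n-k+2$. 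To show this list is complete I would exhibit, for each candidate cell, an explicit lower supporting hyperplane — an affine functional $z=\langle a,x\rangle+\delta$ with $\langle a,v\rangle+\delta\le\lambda(v)$ for every vertex $v$ and equality exactly on the vertices of that cell — then check that the cells cover $\Delta(k,n)$ and meet face to face, using the Eulerian-number formula for $\vol\Delta(k,n)$ as a consistency check on the total volume.

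Once the cells are known the dual graph $\DualGraph(S)$ is immediate: the two ``big'' cells $Z,Z'$ are each adjacent to every other maximal cell, while the $n-k$ remaining cells are mutually non-adjacent. Now let $S'$ be any proper coarsening of $S$; by Proposition~\ref{prop:dg:contraction} it corresponds to a partition of the maximal cells of $S$ into connected blocks, and $\DualGraph(S')$, which is simple by Lemma~\ref{lemma:dg:simple}, is obtained from $\DualGraph(S)$ by contracting the edges inside blocks. The decisive point is that if two cells $F,G$ belong to one block and a third cell $H$ is adjacent in $\DualGraph(S)$ to both $F$ and $G$, then $H$ lies in the same block: otherwise the two \emph{distinct} facets $F\cap H$ and $G\cap H$ of the polytope $H$ would have to lie in a single codimension-one cell of $S'$ (else $\DualGraph(S')$ has a multiedge), but the convex hull of two distinct facets of $H$ is already full-dimensional in $\Delta(k,n)$, contradicting that codimension-one cells are $(n-2)$-dimensional. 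Because each of the $n-k$ small cells is adjacent to both $Z$ and $Z'$, and $Z,Z'$ are adjacent to all cells, this observation propagates: any block of size at least two absorbs one of $Z,Z'$, then the other, then everything. Hence $S'$ is the trivial subdivision, so $S$ is coarsest.

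The step I expect to be the real work is the first one: identifying the maximal cells of $\Delta(k,n)^\lambda$ precisely and proving nothing is missing. These cells are non-simplicial in general, so vertex lists alone do not determine them and one must argue with the inequality descriptions and verify the face-to-face property; the very smallest instances, where cells degenerate to simplices, may have to be inspected separately. Once the dual graph is under control, the coarsest-ness conclusion is short and purely combinatorial.
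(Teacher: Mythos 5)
Your plan takes a genuinely different route from the paper, and the difference is exactly where the trouble sits. The paper never determines the maximal cells of $\Delta(k,n)^\lambda$: it exhibits only the height-zero cell $C$ and one maximal cell $C''$ containing the $n-k$ lifted vertices, and then performs a dimension count showing the secondary cone has dimension at most one modulo the lineality space, so it is a ray. Your argument instead needs the complete list of maximal cells, the spread, and the full dual graph before Lemma~\ref{lemma:dg:simple} and Proposition~\ref{prop:dg:contraction} can be invoked. The second half of your proposal (the contraction/absorption argument) is sound as combinatorics, but everything hangs on the first half, which you yourself call ``the real work'' and do not carry out: that there are exactly $n-k+2$ maximal cells, that no maximal cell contains a proper subset of two or more lifted vertices, and that the two big cells share a facet with every other cell. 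None of this is routine. Even the paper only pins down the cell structure in the case $k=2$ (Theorem~\ref{thm:lambda}), and there the pyramid decompositions together with the Eulerian recursion are needed as an actual covering proof, not a ``consistency check''; for general $k$ you would need volume formulas for $C$, for the cell containing all lifted vertices, and for the $n-k$ small cells, plus an identity summing them to $A(n-1,k)$. Nothing of the sort is supplied, so the proposal is a plan contingent on an unproven classification that is strictly harder than what the paper actually uses.

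Worse, the asserted structure fails at the boundary of the admissible range, so it cannot be waved through. For $(k,n)=(2,4)$, $\lambda$ lifts $1100$ and $1010$, and the resulting subdivision is the triangulation of the octahedron into four tetrahedra around the diagonal $[1001,0110]$: the spread is indeed $n-k+2=4$, but the two ``big'' cells meet only in that diagonal, not in a facet, so your claim that $Z$ and $Z'$ are adjacent to every other cell is false there, and this subdivision is a triangulation, not a coarsest subdivision. Similarly, for $(k,n)=(3,5)$ each of the four cells of $\Delta(3,5)^\lambda$ lies in one closed half of the split hyperplane $x_4+x_5=1$, so that split is a proper coarsening. Thus the adjacency facts your absorption step leans on are parameter-sensitive (they do hold once $n\geq k+3$, but that needs proof, and in these smallest cases even the statement itself requires care -- a subtlety that also affects the paper's own argument, which at the analogous point needs $C''$ to contain enough height-zero vertices). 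The honest assessment: your dual-graph strategy could be made to work for $n\geq k+3$, and it would yield more information than the paper (the spread $n-k+2$ for all $k$), but as written the decisive step is missing, whereas the paper's dimension count deliberately avoids it by using only the two cells $C$ and $C''$.
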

\begin{proof}
  We show that the secondary cone of $\Delta(k,n)^\lambda$ is a ray of the secondary fan.
  To this end, we will prove that the secondary cone of $\Delta(k,n)^\lambda$, which lies in $\RR^{\binom{n}{k}}$, is defined by $\binom{n}{k}-1$ independent linear equations, modulo linealities.
  
  First, observe that $\lambda$ is nonnegative.
  Consequently, the set 
  \begin{equation}\label{eq:lambda:cell-C}
    C \ = \ \conv \bigl\{ x \in V_{k,n} \bigmid \lambda(x)=0 \bigr\}
  \end{equation}
  is a projection of a face of the lifted polytope.
  That is, $C$ is a cell of the subdivision $\Delta(k,n)^\lambda$, and it is full-dimensional.
  Its vertices are labeled ${n-k+1}, \dots, {\binom{n}{k}}$.

  We want to describe the secondary cone.
  To factor out the lineality space, we consider only those height vectors $z$ for which all vertices of $C$ are lifted to 0.
  The pointed secondary cone thus satisfies $z_i = 0$ for every $i \in \{n-k+1, \dots, \binom{n}{k}\}$.
  These form $\binom{n}{k} - n+k$ linear equalities, and we are left to find $n-k-1$ more linear equalities. 
  
  To this end we want to identify a second maximal cell, which contains the vertices labeled $1, \dots, n-k$ that do not belong to $C$.
  Consider the linear function
  \begin{equation} \label{inequality}
    f(x) = \lambda(x) + 2k-2 - 2(x_1+\ldots+x_{k-1})-x_k-\ldots-x_{n-1} \enspace.
  \end{equation}
  We will now show that $f(x) \geq 0$ holds for every vertex $x$ of $\Delta(k,n)$, and $f(x)=0$ holds for the first $n-k$ vertices.
  If $x$ is one of these, its first $k-1$ coordinates read $x_1 = \dots = x_{k-1} = 1$.
  Since $\sum x_i=k$ we infer that there is exactly one index $j$ with $k \leq j \leq n-1$ such that $x_j=1$.
  Furthermore we have $\lambda(x)=1$ and thus $f(x) = 1 + 2k-2 - 2(k-1) - 1 = 0$, as claimed.
  Next we will show that, if $f$ is nonnegative for each vertex of $\Delta(k,n)$.
  It remains to verify this property for the vertices labeled $n-k+1,\dots,\tbinom{n}{k}$.

  If $x$ is the $(n-k{+}1)$st vertex, we have $\lambda(x)=0$, $x_1 = \dots = x_{k-1} = x_{n} = 1$ and all other coordinates vanish. Hence $f(x) = 0$.

  Finally, we consider the case where $x$ is one of the vertices labeled $n-k+2, \dots, \tbinom{n}{k}$.
  Then $\lambda(x)=0$ and $2(x_1+\dots+x_{k-1})+x_{k+1}+\dots+x_{n-1} < 2k-2$, whence $f$ is strictly positive.
  As for the maximal cell $C$ in \eqref{eq:lambda:cell-C}, we conclude that the set
  \begin{equation}\label{eq:lambda:cell-C-prime}
    C' \ = \ \conv \bigl\{ x \in V_{k,n} \bigmid f(x)=0 \bigr\}
  \end{equation}
  is a cell of the subdivision $\Delta(k,n)^\lambda$, which does not need to be maximal however.
  
  Let $C''$ be some maximal cell containing $C'$.
  By construction, $C''$ contains the first $n-k$ vertices of $\Delta(k,n)$.
  Since these vertices are not contained in $C$ and are affinely independent, the cell $C''$ contributes at least $n-k-1$ linear equations to the secondary cone of $S$, which are independent of the $\binom{n}{k}-n+k$ equations found before.
  Hence, the dimension of the secondary cone of $\Delta(k,n)^\lambda$ is at most one, and thus exactly one. 
\end{proof}

In the sequel we will keep the three cells $C$, $C'$ and $C''$ gotten from \eqref{eq:lambda:cell-C} and \eqref{eq:lambda:cell-C-prime}.
The cells $C$ and $C''$ are maximal cells, $C'$ may not be maximal, and $C''$ may not uniquely determined.
We call $C$ the \emph{large cell}.

\begin{proposition}\label{prop:non-matroidal}
  The large maximal cell $C$, with $\tbinom{n}{k}-n+k$ vertices, is not a matroid polytope.
  Consequently, $\lambda$ does not lie in the Dressian $\Dr(k,n)$.
\end{proposition}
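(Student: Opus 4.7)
The plan is to exhibit a pair of vertices of $C$ violating the matroid basis exchange axiom, forcing $C$ not to be a matroid polytope. Since $\Dr(k,n)$ collects exactly those lifting functions whose induced subdivision has all cells being matroid polytopes, the failure of the exchange axiom for the maximal cell $C$ will immediately yield $\lambda\notin\Dr(k,n)$.

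First I would describe the vertex set of $C$ explicitly. Unwinding the definition of $\lambda$ in \eqref{lambda} together with the descending lexicographic ordering of Remark~\ref{rem:ordering}, the $n-k$ vertices of $\Delta(k,n)$ carrying weight $1$ (hence removed from $C$) are exactly $e_{A_j}$ for $A_j := \{1, 2, \ldots, k-1, j\}$ and $j \in \{k, k+1, \ldots, n-1\}$. Equivalently, $C$ is the convex hull of $e_X$ over all $k$-subsets $X \subseteq [n]$ distinct from every $A_j$.

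Next I would present the witness pair $X := \{1, 2, \ldots, k-1, n\}$ and $Y := \{2, 3, \ldots, k+1\}$. Neither equals any $A_j$: the former because $n \notin \{k,\ldots,n-1\}$, the latter because $1 \notin Y$. Hence $e_X$ and $e_Y$ both lie in $C$. A direct check gives $X \setminus Y = \{1, n\}$ and $Y \setminus X = \{k, k+1\}$. Picking the index $i = n \in X \setminus Y$, the only candidate swapped sets are $(X \setminus \{n\}) \cup \{j\} = A_j$ for $j \in \{k, k+1\}$, both of which were removed from $C$ by construction. This is a direct violation of the basis exchange axiom for the pair $(X, Y)$, so $C$ cannot be a matroid polytope.

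I do not anticipate a real obstacle. The only step requiring any care is the lexicographic bookkeeping to identify the deleted vertices of $C$ as the family $\{A_j\}_{j=k}^{n-1}$; once this is in place, the rest is a one-line check, and the statement about $\lambda$ and the Dressian follows at once from the definition of $\Dr(k,n)$.
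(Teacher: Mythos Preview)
Your argument is correct and is in spirit the same as the paper's: both show that the vertex set of $C$ is not the collection of bases of a matroid by exhibiting a local obstruction at $X=\{1,\dots,k-1,n\}$. The paper phrases this via the Gelfand--Goresky--MacPherson--Serganova edge criterion, asserting that the segment from $e_X$ to $e_{\{1,\dots,k-2,k,k+1\}}$ is an edge of $C$ whose direction $e_{k-1}-e_k-e_{k+1}+e_n$ is not of the form $e_i-e_j$; you instead test the basis exchange axiom directly on the pair $(X,Y)$ with $Y=\{2,\dots,k+1\}$. The two characterizations are equivalent, and your variant has the small advantage that it does not require verifying that the two chosen vertices actually span an edge of $C$, a step the paper's proof states but does not justify.
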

\begin{proof}
  It suffices to show that the polytope $C$ has an edge whose direction is not parallel to $e_i - e_j$ for some $i,j$; e.g., see \cite[Theorem 10.4]{ETC}.
  For example, the line through the two adjacent vertices labeled $n-k+1$ and $n-k+2$ has direction $e_{k-1}-e_{k}-e_{k+1}+e_{\binom{n}{k}}$.
\end{proof}

Before we continue, we recall a known relationship between the Eulerian numbers and the volumes of hypersimplices.
The \emph{Eulerian numbers} $A(n,k)$ are given by the recursion
\begin{equation}\label{eulerian:recursion}
  A(n,k)\ =\ k\cdot A(n-1,k) + (n-k+1)\cdot A(n-1,k-1) \enspace,
\end{equation}
with $A(1,1)=1$ and $A(1,k)=0$ for $k\not= 1$.
The formula for the lattice volume
\begin{equation}\label{eq:volume}
  \vol(\Delta(k,n)) \ = \ A(n-1,k)
\end{equation}
was observed by Foata \cite{Foata:1977} and Stanley \cite{Stanley:1977}, who mention that this implicitly occurs already in work of Laplace; see also \cite[Proposition 6.3.16]{Triangulations}.
The Eulerian numbers can be found in The On-Line Encyclopedia of Integer Sequences at \url{https://oeis.org/A008292}.
Note that the lattice volume of $\Delta(k,n)$ agrees with the normalized volume of the projection obtained from omitting one coordinate.

% The reason why we stumbled upon
% these peculiarities is the formula
% that did not fit with the Eulerian numbers provided on Wikipedia. The
% difference is that some versions of Eulerian numbers start at $k=0$ and others
% at $k=1$. Wikipedia cites \cite[p.243 5f]{Comtet:1974} as their source for the
% formula for Eulerian numbers, however the Wikipedia formula contains a $+1$ in
% one of the factors depending on $k$ and hence differs from \cite[p.243
% 5f]{Comtet:1974}. Thus there is a difference between the table on Wikipedia and
% the table \cite[p.243]{Comtet:1974} copied from \cite{David:1966}. We arrive at
% the following formula:
% \[
%    A_{wikipedia}(n,k) = A_{Foata}(n,k+1) = A_{OEIS}(n,k+1).
% \]
% We will use the version of $A(n,k)$ provided in \cite{Foata:1977}.

In the special case $k=2$ we obtain the following particularly concise result.
This will become instrumental in Section~\ref{sec:metric} on finite metric spaces below.

\begin{theorem}
  \label{thm:lambda}
  The regular subdivision $\Delta(2,n)^\lambda$ is a non-matroidal coarsest subdivision of spread $n$.
\end{theorem}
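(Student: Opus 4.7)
The non-matroidal and coarsest claims follow directly from Propositions~\ref{prop:non-matroidal} and \ref{prop:lambda}, so the substance of the theorem is that the spread equals $n$. My strategy is to exhibit $n$ maximal cells of $\Delta(2,n)^\lambda$ and then verify, via a volume computation driven by the Eulerian recursion \eqref{eulerian:recursion}, that they tile $\Delta(2,n)$.

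The first two cells come for free from the proof of Proposition~\ref{prop:lambda}. One is the large cell $C$ from \eqref{eq:lambda:cell-C}. The other is $C'$ from \eqref{eq:lambda:cell-C-prime}; specialising the linear form $f$ from \eqref{inequality} to $k=2$ shows that $C'$ is already full-dimensional, with vertex set
\[
T \ = \ \{e_1+e_j : 2 \leq j \leq n\} \ \cup \ \{e_i+e_j : 2 \leq i < j \leq n-1\}.
\]
For each $j \in \{2,\ldots,n-1\}$, I then introduce the intermediate simplex
\[
D_j \ = \ \conv\bigl(\{e_1+e_j,\ e_1+e_n\} \cup \{e_i+e_j : i \in [n]\setminus\{1,j\}\}\bigr),
\]
which has $n$ affinely independent vertices. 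Each $D_j$ is verified to be a cell of $\Delta(2,n)^\lambda$ by exhibiting the supporting functional $\phi_{D_j}(x) = 2 - x_1 - x_j$: together with $\lambda$ this functional attains its minimum value $1$ precisely on the vertex set of $D_j$. Altogether this gives $n = 1 + 1 + (n-2)$ candidate maximal cells.

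To see that these cells tile $\Delta(2,n)$, the key geometric observation is that both $C$ and $T$ are pyramids with apex $e_1+e_n$ over a copy of $\Delta(2,n-1)$: the base of $C$ lies in the facet $\{x_1=0\}$, while the base of $T$ lies in $\{x_n=0\}$. Since in each case the apex sits at lattice distance one from the base, \eqref{eq:volume} yields $\vol(C)=\vol(T)=A(n-2,2)$. A brief determinant calculation gives $\vol(D_j)=1$. The Eulerian recursion \eqref{eulerian:recursion}, in the form $A(n-1,2) = 2A(n-2,2) + (n-2)$, then matches
\[
\vol(C) + \vol(T) + \sum_{j=2}^{n-1}\vol(D_j) \ = \ 2\,A(n-2,2) + (n-2) \ = \ A(n-1,2) \ = \ \vol(\Delta(2,n)).
\]
Since the $n$ polytopes all lie in $\Delta(2,n)$ and, as cells of a common polyhedral subdivision, meet only in faces (so have disjoint interiors), this volume equality forces them to tile. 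Hence the spread is exactly $n$. The main thing to get right is the pyramid decomposition for $T$; once that is in hand, the Eulerian identity makes everything snap into place.
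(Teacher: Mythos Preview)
Your proof is correct and follows essentially the same route as the paper's own argument: the same cells (your $D_j$ is the paper's $C_i$, cut out by the same functional up to an additive constant; your $T$ is the paper's $C''$), the same pyramid-over-$\Delta(2,n-1)$ description of $C$ and $T$, and the same Eulerian-recursion volume accounting. The only cosmetic differences are that you observe directly that $C'$ is already full-dimensional (the paper instead passes to a maximal cell $C''\supseteq C'$ and later identifies it), and you assert $\vol(D_j)=1$ by a determinant check whereas the paper deduces it a posteriori from the volume balance.
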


\begin{proof}
  We continue our analysis above, specializing to the case $k=2$.
  As $n>2$, the height function $\lambda$ does not induce a split, and so it does not lie in the Dressian $\Dr(2,n)$.
  Of course, this we already knew from Proposition~\ref {prop:non-matroidal}.

  \underline{Claim A:} The spread of the regular subdivision $\Delta(2,n)^\lambda$ is at least $n$.
  So we need to exhibit $n-2$ maximal cells other than $C$ and $C''$.
  Consider the $n-2$ linear functions
  \begin{equation}\label{eq:gamma-i}
    g_i(x) = \lambda(x) + 1 - x_1 - x_i  \enspace ,
  \end{equation}
  where $i \in \{2, \ldots, n-1 \}$.
  It is easy to see that $g_i$ is nonnegative on the vertices of $\Delta(2,n)$, and the proof is analogous to the analysis of $f$ in the proof of Proposition~\ref{prop:lambda}. 
  	
  We will now show that, for every $i$, there are exactly $n$ vertices of $\Delta(2,n)$ for which $g_i$ attains zero.
  With $\lambda(x)=1$ the only vertex $x$ with $g_i(x)=0$ is the one defined by $x_1 = x_i = 1$.
  We are left with the case $\lambda(x)=0$.
  Then the vertex defined by $x_1 = x_n = 1$ is one solution for which $g_i$ vanishes.
  All other solutions are those vertices which have zero as the first coordinate, and two ones among the remaining $n-1$ coordinates, one of which must be $i$ (giving $n-2$ choices).
  So the total number of solutions adds up to $n$, as desired.
  Consequently, these $n$ vertices form a cell, which we denote $C_i$.
  By construction $C_i \neq C_j$ for distinct $i$ and $j$.
  The cells $C_i$ are maximal because their vertices form a matrix of full rank $n$. 
  Clearly $C_i\neq C''$ because $g_i$ does not vanish on all of the first $n-2$ vertices.
  Furthermore, $g_i$ neither vanishes on all the vertices of $C$, which guarantees $C_i\neq C$.
  This concludes the proof that the spread is at least $n$.
        
  It remains to show that $\Delta(2,n)^\lambda$ does not have any maximal cells other than $C$, $C''$, and $C_1,\dots,C_{n-2}$.
  Recall that the cell $C_i$ is defined via the function $g_i$ from \eqref{eq:gamma-i}.

  \underline{Claim B:} The maximal cells $C$ and $C''$ are pyramids of height one over the hypersimplex $\Delta_{2,n-1}$.
  First, observe that both cells contain the vertex $e_1+e_n$, which is labeled $n-1$.
  This vertex forms the apex of both pyramids.
  Now $C$ consists of the vertex $e_1+e_n$ and the vertices with labels $n,\ldots,\binom{n}{2}$.
  Since $k=2$, the latter points are exactly those with first coordinate equal to $0$.
  Hence they form the vertices of the first contraction facet, and this is lattice isomorphic to $\Delta(2,n-1)$.

  For $C''$ a symmetric argument works.
  The reason is that $C''$ comprises the apex and all vertices that end with a $0$.
  The lattice height of the apex $e_1+e_n$ over both bases equals one.
  This argument concludes the proof of Claim~B.

  \underline{Claim C:} The cells $C$ and $C''$ both have lattice volume $A(n-2,2)$.
  This follows immediately from Claim B and the known volume formula \eqref{eq:volume}.

  Now we are ready to complete the proof of the theorem.
  Each maximal cell $C_1, \dots, C_{n-1}$ has lattice volume at least one.
  This observation and the recursion formula \eqref{eulerian:recursion} imply that the total volume of the $n$ maximal cells $C,C'',C_1,\dots,C_{n-2}$ is at least
  \[
    2\cdot A(n-2,2) + n-2 \ = \ 2\cdot A(n-2,2) + (n-2)\cdot A(n-2,1) \ = \ A(n-1,2) \enspace .
  \]
  From \eqref{eq:volume} we know that the latter value equals the lattice volume of $\Delta(2,n)$.
  This entails that there is no room for more maximal cells.
  Furthermore, the argument also tells us that the maximal cells $C_1,\dots,C_{n-2}$ have lattice volume one and thus must be simplices.
\end{proof}

\begin{example}\label{exmp:Delta25-rays}
  Theorem~\ref{thm:lambda} suffices to fully explain the secondary fan $\Sigma(2,5)$ discussed in Example~\ref{exmp:Delta25}.
  The non-split rays of $\Sigma(2,5)$ shown in Figure~\ref{fig:Dr25} form one orbit with respect to the natural $\Sym(5)$-action.
  So, each ray of $\Sigma(2,5)$ is either a split or equivalent to $\lambda_{2,5}$.
\end{example}

We define a second lifting function
\begin{equation}\label{eulerian:height}
   \kappa_{k,n}(i)\ =\ \begin{cases}
      1 & 1\leq i\leq \binom{n-1}{k-1}-1\\
      0 & \text{otherwise.}
   \end{cases}
\end{equation}
For our ordering of the vertices of $\Delta(k,n)$ this means that all the vertices with a leading one are lifted to height one, except for the last one of these.
\begin{observation}\label{obs:lambda=kappa}
  As $n-2 = \tbinom{n-1}{2-1}-1$, we have $\lambda_{2,n}=\kappa_{2,n}$ for every $n$.
\end{observation}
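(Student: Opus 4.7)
The plan is simply to unwind both definitions and observe that they yield the same function once $k=2$ is substituted. Specializing \eqref{lambda} to $k=2$, the threshold becomes $n-k = n-2$, so $\lambda_{2,n}(i)=1$ for $1\le i\le n-2$ and $\lambda_{2,n}(i)=0$ for $n-1\le i\le \tbinom{n}{2}$. Specializing \eqref{eulerian:height} to $k=2$, the threshold becomes $\tbinom{n-1}{k-1}-1 = \tbinom{n-1}{1}-1 = (n-1)-1 = n-2$, so $\kappa_{2,n}(i)=1$ for $1\le i\le n-2$ and $\kappa_{2,n}(i)=0$ for $n-1\le i\le \tbinom{n}{2}$.

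Since both piecewise definitions use the same ordering $V_{2,n}$ from Remark~\ref{rem:ordering} and partition the index set $\{1,\dots,\tbinom{n}{2}\}$ at exactly the same place, $\lambda_{2,n}$ and $\kappa_{2,n}$ agree vertex by vertex, and therefore coincide as lifting functions on $\Delta(2,n)$. There is no real obstacle: the entire content of the observation is the arithmetic identity $\tbinom{n-1}{1}-1 = n-2$, which is recorded verbatim in the statement. Consequently no separate argument beyond matching the two cases of the defining formulas is required.
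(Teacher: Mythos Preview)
Your proposal is correct and matches the paper's treatment: the observation is stated without a separate proof precisely because its content is the arithmetic identity $\tbinom{n-1}{1}-1=n-2$, which you have verified by unwinding the two definitions. There is nothing further to add.
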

Again we abbreviate $\kappa=\kappa_{k,n}$ if the parameters are obvious from the context. We are now going to analyze the properties of the subdivision induced by $\kappa_{k,n}$ for general $k$.  In particular, in the rest of the section we will show that the subdivision is coarsest and that the cell volumes are Eulerian numbers.

Let $\tilde{\Delta}(k,n)$ be the lifted hypersimplex with vertices $V_{k,n}\mid
\kappa_{k,n}$ %, i.e. $V_{k,n}$ lifted to the dictated height
. Let $c$ the vertex labeled $\binom{n-1}{k-1}$ namely 
%\[ c = (1, \underbrace{0, \dots, 0}_{n-k \text{ times}}, \underbrace{1, \dots, 1}_{k-1 \text{ times}}) \enspace, \]
\[ c = e_1+ \sum_{j = n-k+2}^{n} e_j \enspace, \]
i.e., a leading one, $n-k$ zeros, and $k-1$ ones. We use the letter $c$ for ``center'', since it will become clear that $c$ is contained in every cell of the subdivision $\Delta(k,n)^\kappa$.
The lifted point $\tilde{c}$ is
just $c$ with a trailing zero added.

\begin{lemma}\label{eulerian:lemma:small_facets}
	Consider the affinely linear functions
	\[
	f_i(x)\ =\ 1-x_1-x_i+x_{n+1} \enspace,
	\]
	where $i\in\{2,\ldots,n-k+1\}$,
	on $\RR^{n+1}$, the space where $\tilde{\Delta}(k,n)$ lives. Then
	\begin{enumerate}
		\item the $f_i$ cut out lower facets of $\tilde{\Delta}(k,n)$;
		\item these facets are isomorphic to lattice height one pyramids over 
		$\Delta(k-1,n-1)$.
	\end{enumerate}
	There are exactly $n-k$ such facets.
\end{lemma}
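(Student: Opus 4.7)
The plan is the standard verification that each $f_i$ is a facet-defining inequality for $\tilde{\Delta}(k,n)$, followed by a combinatorial identification of the resulting facet. Throughout, I would write vertices of $\Delta(k,n)$ as $e_X$ for $X\in\tbinom{[n]}{k}$, and recall that by the definition of $\kappa$ the only leading-one vertex with lift $0$ is the center $c=e_1+e_{n-k+2}+\cdots+e_n$.

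First, I would evaluate $f_i(\tilde{e}_X)=1-[1\in X]-[i\in X]+\kappa(e_X)$ by a case split on whether $1\in X$ and whether $i\in X$. The only case that could fail nonnegativity is $1,i\in X$, where $f_i=\kappa(e_X)-1$ and one needs $\kappa(e_X)=1$, i.e.\ $e_X\neq c$. The range $i\in\{2,\dots,n-k+1\}$ is chosen exactly so that $i\notin\{1,n-k+2,\dots,n\}=\operatorname{supp}(c)$; hence $i\in X$ automatically excludes $e_X=c$, and the four-case analysis yields $f_i\geq 0$ on all lifted vertices, with equality exactly on $\{\tilde{e}_X:i\in X\}\cup\{\tilde{c}\}$, a set of $\tbinom{n-1}{k-1}+1$ points. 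Rewriting $f_i\geq 0$ as $x_1+x_i-x_{n+1}\leq 1$ shows that its outer normal has negative last coordinate, so the inequality is a lower one.

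Next, to confirm the dimension of this face and identify its pyramidal shape, I would observe that the $\tbinom{n-1}{k-1}$ vertices $e_X$ with $i\in X$ form an $(n-2)$-dimensional lattice copy of $\Delta(k-1,n-1)$ via $X\mapsto X\setminus\{i\}$, embedded in the coordinate hyperplane $\{x_i=1\}$. Since $c_i=0$, the vertex $\tilde{c}$ lies outside the affine hull of the remaining equality vertices, so the equality set of $f_i$ has affine dimension $n-1$, matching the facet dimension of the $n$-dimensional polytope $\tilde{\Delta}(k,n)$. Because $\kappa$ is a single-valued function, the projection $(x_1,\dots,x_{n+1})\mapsto(x_1,\dots,x_n)$ is injective on the facet's vertices; its restriction to the facet's affine hull is inverted by $x_{n+1}=x_1+x_i-1$, and is therefore a unimodular affine isomorphism onto $\conv\bigl(\Delta(k-1,n-1)\cup\{c\}\bigr)$. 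This exhibits the facet as a lattice pyramid of height one over $\Delta(k-1,n-1)$ with apex $c$. Since distinct $i$ give distinct base vertex sets $\{e_X:i\in X\}$, the $n-k$ admissible values of $i$ produce $n-k$ distinct facets.

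The only real subtlety is the bookkeeping around the exceptional vertex $c$, whose lift drops to $0$ despite $1\in\operatorname{supp}(c)$; matching the range of $i$ to the complement of $\operatorname{supp}(c)\setminus\{1\}$ is what makes the uniform case analysis go through. Once that restriction is in place, the rest of the argument is direct verification.
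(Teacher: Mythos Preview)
Your proof is correct and follows essentially the same route as the paper: verify $f_i\geq 0$ on all lifted vertices by a case analysis on membership of $1$ and $i$ in $X$, identify the equality set as the contraction facet $\{x_i=1\}\cong\Delta(k-1,n-1)$ together with the center $c$, and read off the height-one pyramid structure from $c_i=0$. Your treatment is in fact slightly more explicit than the paper's in checking the facet dimension and the unimodularity of the projection, and in noting that the outer normal has negative last coordinate to justify the word ``lower''.
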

\begin{proof}
   We start by analyzing the values of $f_i$ on the various lifted points.
	\begin{itemize}
      \item For the lifted center $\tilde{c}$ we have $f_i(\tilde{c})=0$, since
         $\tilde{c}_1=1$ and $\tilde{c}$ has zeros at positions
         $i\in\{2,\ldots,n-k+1\}$, and $\tilde{c}_{n+1}=0$, since it is lifted
         to height 0.
      \item We have $f_i\ge 0$ on the points lifted to height $1$, since the height is added in the last coordinate and hence we have $x_{n+1}=1$ in the definition of $f_i$, while the other variables are $0$ or $1$.
		\item Lastly, we have $f_i\ge 0$ on the points lifted to height $0$. The point
		$\tilde{c}$ was discussed above, all other points will not have a leading one, and
		hence $x_1$ is zero.
	\end{itemize}
	In particular, this implies that the $f_i$ cut out lower facets. We have not yet
	deduced the dimension, but this will become clear from the remainder of this
	proof.
	
	Each lower facet defined by $f_i$
	consists of the points where $f_i$ evaluates to $0$. These are: 
	\begin{itemize}
		\item the point $\tilde{c}$ as discussed above;
		\item the points with leading one, i.e. $x_1=1$, and $x_i=1$, since these also have
		height $x_{n+1}=1$;
		\item The points with leading zero and $x_i=1$, since these are lifted to
		height $x_{n+1}=0$.
	\end{itemize}
	In terms of $\Delta(k,n)$ the projected set of points consists of the
	contraction facet $x_i=1$, which is isomorphic to the hypersimplex $\Delta(k-1,n-1)$
	and the point $c$. The height of $c$ above this facet can be determined by
	taking the scalar product with the $i$-th unit vector. Hence these facets
	are pyramids of height one over $\Delta(k-1,n-1)$, so their lattice volume
	must be equal to the lattice volume of $\Delta(k-1,n-1)$.
\end{proof}

\begin{lemma}\label{eulerian:lemma:large_facets}
	Consider the affinely linear functions
	\[
	g_i(x)\ =\ -x_1+x_i+x_{n+1} \enspace,
	\]
	where $i\in\{n-k+2,\ldots,n\}$,
	on $\RR^{n+1}$, the space where $\tilde{\Delta}(k,n)$ lives. Then
	\begin{enumerate}
		\item The $g_i$ cut out lower facets of $\tilde{\Delta}(k,n)$.
		\item These facets are isomorphic to lattice height one pyramids over 
		$\Delta(k,n-1)$.
	\end{enumerate}
	There are exactly $k-1$ such facets,
\end{lemma}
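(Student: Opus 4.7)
The plan is to mirror the proof of Lemma~\ref{eulerian:lemma:small_facets}, exchanging the role played by the zero coordinates of the center $c$ with that of its one coordinates. Concretely, I would verify nonnegativity of each $g_i$ on all lifted vertices of $\tilde\Delta(k,n)$, read off its zero locus, and then identify the resulting lower face as a lattice pyramid of height one with apex $c$ over the deletion facet $\{x_i=0\}$ of $\Delta(k,n)$.

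For the nonnegativity check I would split the lifted vertices into three classes: the center $\tilde c$ (leading one, lifted to $0$); the remaining vertices with leading one (lifted to $1$); and the vertices with leading zero (lifted to $0$). On the last two classes $g_i$ reduces to $x_i \geq 0$, with equality precisely when $x_i=0$. On $\tilde c$ the contribution of $-x_1$ is $-1$, the lift contributes $0$, and the key observation is that $c_i=1$ for every $i\in\{n-k+2,\ldots,n\}$, so $g_i(\tilde c)=-1+1+0=0$. This shows $g_i\geq 0$ on $\tilde\Delta(k,n)$ and hence that $g_i$ cuts out a lower face, namely the convex hull of $\tilde c$ together with all lifted vertices whose $i$-th coordinate vanishes.

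Projecting back to $\Delta(k,n)$, this face consists of the deletion facet $\{x_i=0\}$ together with the additional point $c$. The deletion facet is lattice isomorphic to $\Delta(k,n-1)$, and since $c_i=1$ the apex $c$ does not lie on it. The lattice height of $c$ above the facet is obtained by pairing with the inner normal $e_i$, which yields $c_i=1$, so the face is indeed a pyramid of lattice height one over $\Delta(k,n-1)$; this in particular proves full-dimensionality and therefore confirms claim (1). A count of admissible indices gives $|\{n-k+2,\ldots,n\}|=k-1$, settling the final assertion. The only mildly delicate point is ensuring that $c$ does not violate the nonnegativity of $g_i$, and this is precisely what dictates the range of $i$: only those coordinates where $c$ has a one are admissible, in perfect symmetry with the small-facet case of Lemma~\ref{eulerian:lemma:small_facets}, where only the zero coordinates of $c$ could be used.
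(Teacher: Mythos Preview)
Your proof is correct and follows essentially the same approach as the paper, which simply says that positivity of $g_i$ ``follows along the lines of the proof for $f_i$'' and then identifies the zero locus as $c$ together with the facet $\{x_i=0\}\cong\Delta(k,n-1)$. In fact your write-up is more detailed than the paper's, and your use of ``deletion facet'' for $\{x_i=0\}$ is the standard terminology.
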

\begin{proof}
	Positivity of $g_i$ on $\tilde{\Delta}(k,n)$ follows along the lines of the
	proof for $f_i$.
	
	The points cut out by $g_i$ are the points of $\Delta(k,n)$ such that
	$x_i=0$ and $c$. The contraction facet with $x_i=0$ is isomorphic to $\Delta(k,n-1)$,
	together with $c$ we have a pyramid over it.
\end{proof}

\begin{proposition}
  \label{prop:kappa}
  The subdivision $\Delta(k,n)^\kappa$ comprises exactly $n$ maximal cells.
  Of these cells, $k$ are pyramids of height one over $\Delta(k, n-1)$, and $n{-}k$ are pyramids over $\Delta(k-1,n-1)$.
\end{proposition}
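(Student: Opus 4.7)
The plan is to supplement the two preceding lemmas with exactly one further lower facet of $\tilde{\Delta}(k,n)$, and then close the argument by a volume count driven by the Eulerian recursion \eqref{eulerian:recursion}.

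Lemmas \ref{eulerian:lemma:small_facets} and \ref{eulerian:lemma:large_facets} together provide $(n-k)+(k-1)=n-1$ lower facets, so exactly one maximal cell is still missing. I would produce it by looking at the linear functional $h(x)=x_{n+1}$ on $\RR^{n+1}$. Since $\kappa$ takes only non-negative values, $\tilde{\Delta}(k,n)$ lies in the halfspace $\{x_{n+1}\geq 0\}$, and the face $F$ cut out by $h=0$ is therefore a lower face. Its vertices are precisely the lifted points at height zero, that is, $\tilde{c}$ together with $\tilde{v}$ for every $v\in V_{k,n}$ with $v_1=0$. Projecting away the last coordinate, the associated cell of $\Delta(k,n)^\kappa$ is
\[
  \conv\bigl(\{c\}\cup\{v\in V_{k,n} : v_1=0\}\bigr).
\]
The deletion facet $\{x_1=0\}\cap\Delta(k,n)$ is a copy of $\Delta(k,n-1)$ of dimension $n-2$, and $c_1=1$ places $c$ at lattice height one above the hyperplane $\{x_1=0\}$. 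Hence $F$ projects to a pyramid of lattice height one over $\Delta(k,n-1)$ with apex $c$; in particular it has dimension $n-1$, so $F$ is an honest lower \emph{facet}. It is distinct from the facets produced in the two lemmas, since it is the unique one whose base is the deletion facet $\{x_1=0\}$.

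Altogether this yields $k$ pyramids of height one over $\Delta(k,n-1)$ and $n-k$ pyramids of height one over $\Delta(k-1,n-1)$, all maximal cells of $\Delta(k,n)^\kappa$. To rule out further maximal cells I would compute the total lattice volume. Since a pyramid of lattice height one over a lattice polytope $B$ has normalized volume $\vol(B)$, using \eqref{eq:volume} the $n$ cells contribute
\begin{equation*}
  k\cdot A(n-2,k)\,+\,(n-k)\cdot A(n-2,k-1) \ = \ A(n-1,k) \ = \ \vol\bigl(\Delta(k,n)\bigr),
\end{equation*}
where the first equality is the Eulerian recursion \eqref{eulerian:recursion}. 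As cells in a polyhedral subdivision have pairwise disjoint interiors whose volumes sum to the volume of the ambient polytope, no additional maximal cell can fit.

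The only point that requires genuine care is the identification of the extra facet $F$, in particular checking that $h=0$ really cuts out a face of full facet dimension $n-1$ rather than something smaller. After that, the rest of the argument is bookkeeping: invoke \eqref{eulerian:recursion} and \eqref{eq:volume} to see that the $n$ explicit cells already account for the entire lattice volume of $\Delta(k,n)$.
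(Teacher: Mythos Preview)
Your argument is correct and follows the same strategy as the paper: invoke the two lemmas, produce one more lower facet by hand, and then close by the Eulerian volume recursion. In fact your identification of the extra cell via $h(x)=x_{n+1}$ is more explicit than the paper's one-line description (the paper says ``the cell from all points that got embedded at height one'', which appears to be a slip for height zero; your description is the correct one).
\qed
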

\begin{proof}
	Lemma~\ref{eulerian:lemma:small_facets} shows the part about $n-k$ cells
	being pyramids over $\Delta(k-1,n-1)$.
	Lemma~\ref{eulerian:lemma:large_facets} gives us $k-1$ of the other cells.
	The last cell we need is just the cell from all points that got embedded at
	height one in $\tilde{\Delta}(k,n)$.
	
	The lattice volumes of these cells add up to the lattice volume of
	$\Delta(k,n)$ by the formula \ref{eulerian:recursion} and this finishes the
	proof.
\end{proof}

After studying the cells of the subdivision, we can proceed by showing that it
has no non-trivial coarsening. 

\begin{lemma}\label{eulerian:lemma:complete}
  For $2<k<n-2$ the dual graph of $\Delta(k,n)^\kappa$ is complete.
\end{lemma}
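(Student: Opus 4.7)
The strategy is to exploit the fact that, by Proposition~\ref{prop:kappa} together with Lemmas~\ref{eulerian:lemma:small_facets} and~\ref{eulerian:lemma:large_facets}, all $n$ maximal cells of $\Delta(k,n)^\kappa$ are pyramids with a common apex, the center $c$. Establishing that the dual graph is complete then reduces to verifying that any two such pyramids meet in a common face of codimension one in $\Delta(k,n)$.

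More explicitly, I would organize the cells as follows: one cell $C$ arising from the lower facet $x_{n+1}=0$, with base $\{x\in V_{k,n}\mid x_1=0\}\cong\Delta(k,n-1)$; the $n-k$ cells $F_i$ for $i\in\{2,\ldots,n-k+1\}$, with base $\{x\in V_{k,n}\mid x_i=1\}\cong\Delta(k-1,n-1)$; and the $k-1$ cells $G_i$ for $i\in\{n-k+2,\ldots,n\}$, with base $\{x\in V_{k,n}\mid x_i=0\}\cong\Delta(k,n-1)$. In each case the cell has the form $\conv(\{c\}\cup B)$ with apex $c$ at lattice height one above the base $B$. Fixing two distinct cells $P_1=\conv(\{c\}\cup B_1)$ and $P_2=\conv(\{c\}\cup B_2)$, the fact that both are cells of a polyhedral complex forces $P_1\cap P_2$ to be a face of each, and hence $V(P_1\cap P_2)=V(P_1)\cap V(P_2)=\{c\}\cup(B_1\cap B_2)$.

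Each base is cut out of $V_{k,n}$ by a single coordinate condition $x_r=0$ or $x_r=1$, so $B_1\cap B_2$ is the vertex set of the hypersimplex obtained by fixing two distinct coordinates, and a short case analysis yields $B_1\cap B_2\cong\Delta(a,n-2)$ for some $a\in\{k-2,k-1,k\}$. The hypothesis $2<k<n-2$ guarantees $1\le a\le n-3$ in every instance, whence $B_1\cap B_2$ has dimension $n-3$. Since $c$ violates at least one of the two defining coordinate conditions, it lies outside the affine hull of $B_1\cap B_2$, and adjoining it raises the dimension by one. Consequently $\dim(P_1\cap P_2)=n-2$, which is codimension one in $\Delta(k,n)$, and the dual graph is complete.

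The main obstacle, and the reason for the bounds on $k$, concerns exactly the two extreme parameter values: $a=k-2$ arising from $F_i\cap F_j$ collapses $B_1\cap B_2$ to a point when $k=2$, while $a=k$ arising from $G_i\cap G_j$ or $C\cap G_i$ collapses when $k=n-2$. Both degeneracies are precisely what the assumption $2<k<n-2$ is designed to exclude, so the proof works uniformly throughout the stated range.
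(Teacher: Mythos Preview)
Your proposal is correct and follows essentially the same approach as the paper's proof: both arguments use that every maximal cell is a pyramid with apex $c$ over a coordinate facet of $\Delta(k,n)$, intersect the bases pairwise to obtain a hypersimplex $\Delta(a,n-2)$ with $a\in\{k-2,k-1,k\}$, and invoke $2<k<n-2$ to ensure this has dimension $n-3$. Your version is slightly more explicit in separating out the cell $C$ with base $\{x_1=0\}$ and in justifying that $c$ lies outside the affine hull of $B_1\cap B_2$, but these are cosmetic rather than substantive differences.
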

\begin{proof}
	In Lemma~\ref{eulerian:lemma:small_facets} and
	Lemma~\ref{eulerian:lemma:large_facets} we discussed exactly which equations
	cut out the base of the cells, all of which were pyramids over some
	hypersimplex. Since all pyramids had apex $c$, $c$ will also be contained in
	any intersection. For intersecting two cells, we just have to combine two
	equations. There are three cases:
	\begin{itemize}
		\item Assume $c_i=c_j=0$ and $x_i=x_j=1$. Taking $x_i=1$ gave us a contraction facet
		isomorphic to the hypersimplex $\Delta(n-1,k-1)$ and now selecting
		from these points those with $x_j=1$ results in a smaller facet which
		is isomorphic to $\Delta(n-2,k-2)$. This has dimension $n-3$, since
		we require $k>2$. Combined with $c$ the intersection has dimension
		$n-2$, hence it is a facet.
		\item Similarly assume $c_i=0$ and $c_j=1$. Thus we have the equations
		$x_i=1$ and $x_j=0$. This time the intersection consists of a base
		that is isomorphic to $\Delta(n-2,k-1)$ and apex $c$.
		\item For the last case we have $c_i=c_j=1$ and $x_i=x_j=1$. We arrive at
		$\Delta(n-2,k)$.
	\end{itemize}
\end{proof}

% By this shows that $\kappa_{k,n}$ induces a coarsest subdivision for $2<k<n-2$. 

\begin{theorem}
  \label{thm:kappa}
  The regular subdivision $\Delta(k,n)^\kappa$ is a non-matroidal coarsest subdivision of spread $n$. 
  Out of the $n$ maximal cells, $k$ are pyramids of height one over $\Delta(k, n-1)$, and the remaining $n-k$ are pyramids over $\Delta(k-1,n-1)$.
\end{theorem}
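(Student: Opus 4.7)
The plan is to assemble the preparatory results already proved in this section. Proposition~\ref{prop:kappa} supplies the spread $n$ along with the description of the maximal cells as pyramids of height one over $\Delta(k,n-1)$ or $\Delta(k-1,n-1)$. Hence only the coarsest and the non-matroidal statements remain.

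For the coarsest property, I would proceed by cases on the parameter range. In the main range $2 < k < n-2$, Lemma~\ref{eulerian:lemma:complete} establishes that the dual graph of $\Delta(k,n)^\kappa$ is complete, so Corollary~\ref{cor:coarsest} immediately yields that the subdivision is coarsest. For the boundary case $k=2$, Observation~\ref{obs:lambda=kappa} identifies $\kappa_{2,n}$ with $\lambda_{2,n}$, and the conclusion is exactly Theorem~\ref{thm:lambda}. For $k = n-2$, the lattice equivalence $\Delta(k,n) \cong \Delta(2,n)$ obtained by exchanging zeros and ones carries $\Delta(n-2,n)^\kappa$ to a regular subdivision of $\Delta(2,n)$; since this equivalence preserves the refinement poset, coarsest-ness transfers, and one checks that the image matches a known coarsest subdivision of $\Delta(2,n)$, for example a $\Sym(n)$-image of $\lambda_{2,n}$.

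For the non-matroidal claim, it suffices to exhibit a cell of $\Delta(k,n)^\kappa$ with an edge whose direction is not of the form $e_a - e_b$, as in Proposition~\ref{prop:non-matroidal}. Consider the pyramid cell with apex $c = e_1 + e_{n-k+2} + \dots + e_n$ over the deletion facet $\Delta(k,n)_{x_1 = 0}$, which is one of the maximal cells described in Proposition~\ref{prop:kappa}. Since $n-k \geq 2$, one can pick distinct indices $j_1, j_2 \in \{2, \dots, n-k+1\}$ and take the base vertex $v = e_{j_1} + e_{j_2} + e_{n-k+2} + \dots + e_{n-1}$, which has exactly $k$ ones. Since in a pyramid the apex is joined by an edge to every base vertex, the segment $\{c, v\}$ is an edge of the cell; its direction $c - v = e_1 + e_n - e_{j_1} - e_{j_2}$ has four nonzero coordinates and therefore is not matroidal, so the cell (and thus the subdivision) is not matroidal.

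The main obstacle is the extremal case $k = n-2$, where Lemma~\ref{eulerian:lemma:complete} does not apply and one must rely on the lattice duality rather than a direct dual-graph argument; some care is needed to check that the image under $v \mapsto \mathbf{1} - v$ lands on a subdivision of $\Delta(2,n)$ that is already known to be coarsest. In the internal range, as well as for the non-matroidal claim, the proof is essentially a matter of invoking the preparatory results.
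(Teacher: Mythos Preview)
Your proposal is correct and, for the coarsest part, follows the paper's line exactly: Lemma~\ref{eulerian:lemma:complete} plus Corollary~\ref{cor:coarsest} in the range $2<k<n-2$, and Observation~\ref{obs:lambda=kappa} plus Theorem~\ref{thm:lambda} at the boundary. You are in fact more careful than the paper at $k=n-2$, where the paper simply lumps it together with $k=2$; your duality argument is the right way to fill this in. The check you flag as needing ``some care'' goes through cleanly once you note that, modulo the lineality space of $\Sigma(2,n)$, the function $\kappa_{2,n}$ is equivalent to the negative indicator of the single vertex $e_1+e_n$ (since $[1\in Y]$ is affine on $\Delta(2,n)$), and the image of $\kappa_{n-2,n}$ under $v\mapsto\mathbf 1-v$ is likewise equivalent to the negative indicator of $e_2+e_3$; these differ by an element of $\Sym(n)$.

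Where you genuinely diverge from the paper is the non-matroidal claim. The paper argues that $\kappa\notin\Dr(k,n)$ by observing that some octahedral $3$-faces of $\Delta(k,n)$ get triangulated, violating the $3$-term Pl\"ucker relations. You instead exhibit a concrete edge $\{c,v\}$ in the pyramid cell over the deletion facet $x_1=0$ whose direction $e_1+e_n-e_{j_1}-e_{j_2}$ has four nonzero entries, directly invoking the edge criterion used in Proposition~\ref{prop:non-matroidal}. Your argument is more elementary and self-contained (it avoids the Pl\"ucker characterization entirely), while the paper's approach has the virtue of pointing to \emph{which} facets of $\Delta(k,n)$ witness the failure and thereby connects back to the tropical-geometric viewpoint emphasized in Section~\ref{sec:hypersimplices}.
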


\begin{proof}
  The fact that $\Delta(k,n)^\kappa$ is a coarsest subdivision follows from Corollary~\ref{cor:coarsest} and Lemma \ref{eulerian:lemma:complete} for $2<k<n-2$, and from Theorem \ref{thm:lambda} and Observation~\ref{obs:lambda=kappa} for $k \in \{ 2, n-2 \}$.
  Points on the Dressian are characterized by what they induce on the (octahedral) $3$-faces of $\Delta(k,n)$; these are the $3$-term Plücker relations.
  One half, i.e., $k+(n-k)=n$, of the facets of $\Delta(k,n)$ are not subdivided by $\kappa$.
  There are at least $n-k$ facets of type $\Delta(k, n-1)$ which are subdivided such that at least some of their octahedral $3$-faces are triangulated.
  This argument shows that $\kappa\not\in\Dr(k,n)$.
\end{proof}

The following argument shows that $\lambda_{k,n}$ and $\kappa_{k,n}$ are different, in general.
\begin{corollary}
  The regular subdivisions $\Delta(k,n)^\lambda$ and $\Delta(k,n)^\kappa$ are combinatorially isomorphic if and only if $k=2$.
\end{corollary}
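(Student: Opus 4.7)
The forward implication is immediate from Observation~\ref{obs:lambda=kappa}: when $k=2$, the height functions $\lambda_{2,n}$ and $\kappa_{2,n}$ coincide, so they induce identical subdivisions, which are then trivially combinatorially isomorphic. The content lies in the converse.

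For the converse, assume $3\le k\le n-2$. The strategy is to produce a combinatorial invariant distinguishing the two subdivisions, namely the maximum vertex count among their maximal cells. Theorem~\ref{thm:kappa} describes every maximal cell of $\Delta(k,n)^\kappa$ as a pyramid of height one over either $\Delta(k,n-1)$ or $\Delta(k-1,n-1)$, so each such cell has at most $\max\{\binom{n-1}{k},\binom{n-1}{k-1}\}+1$ vertices. On the $\lambda$-side, Proposition~\ref{prop:lambda} exhibits the large cell $C$ of $\Delta(k,n)^\lambda$ from \eqref{eq:lambda:cell-C}, whose vertex set consists of the $\binom{n}{k}-(n-k)$ vertices of $\Delta(k,n)$ on which $\lambda$ vanishes.

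Using Pascal's identity $\binom{n}{k}=\binom{n-1}{k}+\binom{n-1}{k-1}$, the vertex-count gap rewrites as
\[
  \bigl(\tbinom{n}{k}-(n-k)\bigr) - \bigl(\max\{\tbinom{n-1}{k},\tbinom{n-1}{k-1}\}+1\bigr) \ = \ \min\{\tbinom{n-1}{k},\tbinom{n-1}{k-1}\} - (n-k+1).
\]
It therefore suffices to verify $\min\{\binom{n-1}{k},\binom{n-1}{k-1}\}>n-k+1$ throughout the range $3\le k\le n-2$. For such $k$ both $k$ and $k-1$ lie in $\{2,\ldots,n-2\}$, and by unimodality and symmetry of $j\mapsto\binom{n-1}{j}$ on this range, both binomial coefficients are bounded below by the endpoint values $\binom{n-1}{2}$ and $\binom{n-1}{n-2}=n-1$; for $n\ge 4$ this common lower bound is $n-1$. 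Since $k\ge 3$ forces $n-1>n-k+1$, the desired inequality follows.

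Because a combinatorial isomorphism of polyhedral subdivisions sends maximal cells to combinatorially isomorphic maximal cells, and in particular preserves the number of vertices of each such cell, the previous paragraph rules out any combinatorial isomorphism between $\Delta(k,n)^\lambda$ and $\Delta(k,n)^\kappa$ whenever $k\ge 3$. The only mild subtlety is to handle the boundary cases $k=3$ (where $\binom{n-1}{k-1}=\binom{n-1}{2}$ is the smaller coefficient) and $k=n-2$ (where $\binom{n-1}{k}=n-1$ is the smaller one) uniformly; invoking unimodality delivers both at once.
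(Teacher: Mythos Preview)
Your proof is correct and follows essentially the same route as the paper: for $k=2$ invoke Observation~\ref{obs:lambda=kappa}, and for $k\ge 3$ show that the large cell $C$ of $\Delta(k,n)^\lambda$ has strictly more vertices than any maximal cell of $\Delta(k,n)^\kappa$. The only difference is that you are slightly more careful: the paper bounds the $\kappa$-cells by $\binom{n-1}{k}+1$ vertices and checks only the single inequality $n-k+1<\binom{n-1}{k-1}$, whereas you treat both pyramid types simultaneously via $\max\{\binom{n-1}{k},\binom{n-1}{k-1}\}+1$ and reduce to the clean inequality $\min\{\binom{n-1}{k},\binom{n-1}{k-1}\}>n-k+1$, which your unimodality argument handles uniformly across the full range $3\le k\le n-2$.
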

\begin{proof}
  Observation~\ref{obs:lambda=kappa} deals with the case $k=2$.
  So let us assume that $k\geq 3$.
  Then an easy induction shows that $n-k+1$ is strictly smaller than $\tbinom{n-1}{k-1}=\tbinom{n}{k}-\tbinom{n-1}{k}$.
  Consequently, the large maximal cell $C$ of $\Delta(k,n)^\lambda$, which has $\tbinom{n}{k}-n+k$ vertices, is larger than any (maximal) cell of $\Delta(k,n)^\kappa$, which has at most $\tbinom{n-1}{k}+1$ vertices.
\end{proof}

\section{Finite Metric Spaces}
\label{sec:metric}

Here, we put a special focus on the second hypersimplices $\Delta(2,n)$, which are naturally related to finite metric spaces \cite{DezaLaurent:2010,T-theory} and phylogenetics \cite{SempleSteel:2003}; see also \cite[\S10.6]{ETC}.
We summarize the connection.
A \emph{dissimilarity map} on $n$ points is a map $D:\tbinom{[n]}{2}\to\RR_{\geq0}$.
Via the identification of the 2-element set $\{i, j\}\subset[n]$ with the point $e_{\{i,j\}}=e_i + e_j$, the dissimilarity map $D$ or its negative, $-D$, may be viewed as a lifting function on $\Delta(2,n)$.
In this case the lower envelope from \eqref{eq:envelope} becomes
\[
  \envelope_{-D}(\Delta(2,n)) \ = \ \bigl\{x\in\RR^{n}_{\geq 0} \bigmid x_i + x_j \geq D(i,j) \text{ for all } i \neq j \bigr\} \enspace .
\]
It is known that $-D$ lies in the Dressian $\Dr(2,n)$ if and only if the tight span of $\Delta(2,n)^{-D}$ is 1-dimensional, i.e., a tree \cite[Corollary 10.48]{ETC}.
Passing to the negative is natural here as the four-point condition is a max-tropical vanishing condition, which in turn asks for regular subdivisions defined via upper convex hulls; see \cite[Lemma~4.3.6]{Tropical+Book}, \cite[Theorem 10.47]{ETC} and Remark~\ref{rem:upper-lower}.

\subsection*{The metric cone}
Now we need to understand how metric spaces enter the picture.
A dissimilarity map $D:\tbinom{[n]}{2}\to\RR_{\geq0}$ is a \emph{pseudo-metric} if it satisfies the triangle inequality $D(i,k)\leq D(i,j)+D(j,k)$ for all $i,j,k\in[n]$.
Observe that the triangle inequalities form homogeneous linear inequalities in $\RR^{\tbinom{n}{2}}$.
So the set of all pseudo-metrics on $n$ points forms a cone which lies in the nonnegative orthant of $\RR^{\tbinom{n}{2}}$; this is called the \emph{metric cone} $\MC(n)$.
The latter cone is called the \enquote{semi-metric cone} in the monograph of Deza and Laurent \cite{DezaLaurent:2010}.
Table~\ref{tab:MC8:rays} shows the number of rays and their orbits of the metric cones $\MC(n)$ for $n\leq 8$; the values for $n\geq 9$ are unknown.
A pseudo-metric $D$ is called \enquote{primitive} in \cite[\S20.3]{DezaLaurent:2010} if it spans a ray of the metric cone.
Avis constructed infinite families of rays of $\MC(n)$ from graphs \cite{Avis:1980}.

% ATTENTION: below total counts, not orbits!
% polytope > map { print metric_cone($_)->N_RAYS, " " } 3..7;
% 3 7 25 296 55226 
\begin{table}[tb]
  \centering
  \caption{Number of rays  of the metric cone $\MC(n)$ and their orbits according to Deza and Dutour-Sikri\'c \cite[Table 1]{DezaDutour:2018}.
    The number of orbits of rays of $\MC(6)$ is erroneously reported to be seven in loc.\ cit.}
%    The number $r(6)$ is given in \cite[Table 1]{Grishukhin:1992} and $r(8)$ occurs in \cite[Proposition 1]{DezaFukudaMizutaniVo:2003}
  \label{tab:MC8:rays}
  \begin{tabular*}{.8\linewidth}{@{\extracolsep{\fill}}lcccccc@{}}
    \toprule
    $n$ & 3 & 4 & 5 & 6 &  7 &    8 \\
    \midrule
     rays & 3 & 7 & 25 & 296 & 55{,}226 & 119{,}269{,}588 \\
     orbits & 1 & 2 &  3 &   8 &    46 &      3{,}918 \\
    \bottomrule
  \end{tabular*}
\end{table}

Any partition of $[n]$ with two parts, $A$ and $B$, gives rise to the \emph{split pseudo-metric}
\begin{equation}\label{eq:split-metric}
  D_{A,B}(i,j) \ = \
  \begin{cases}
    0 & \text{if } \{i,j\}\subseteq A \text{ or } \{i,j\}\subseteq B \\
    1 & \text{otherwise.}
  \end{cases}
\end{equation}
The map $D_{A,B}$ is called a \enquote{split semi-metric} in \cite{DezaDutour:2018}; the name \enquote{cut semi-metrics} is used in \cite{DezaLaurent:2010}.
With respect to the natural $\Sym(n)$-action there are only $\lfloor \tfrac{n}{2} \rfloor$ orbits, which we denote
\[
  D_{1,n-1}, D_{2,n-2}, \dots, D_{\lfloor n/2\rfloor, \lceil n/2\rceil} \enspace .
\]
The pseudo-metric $D_{A,B}$ lies in the orbit $D_{|A|,|B|}$ for $|A|\leq|B|$.
Observe that $D_{A,B} = -D_{B,A}+\1$.
That is, for the classification into types, the sign does not matter much here.

The split pseudo-metrics form rays of the metric cone.
Conversely, each ray of $\MC(n)$ corresponds to a split pseudo-metric for $n\leq 4$.
For $n\geq 5$ there are other rays, too.
\begin{example}
  Table~\ref{tab:MC8:rays} tells us that the metric cone $\MC(5)$ has 25 rays in three orbits.
  Only one of them does not correspond to a split pseudo-metric.
  This orbit is generated by $-\lambda_{2,5}$, where $\lambda_{2,5}$ is defined in \eqref{lambda}.
\end{example}

\subsection*{The metric fan}
Next we need to understand the interplay between $\MC(n)$ and $\Sigma(2,n)$.
The \emph{secondary metric cone} of the height function $\delta:\tbinom{[n]}{2}\to\RR$ is defined as
\begin{equation}
  \label{eq:secondary-metric-cone}
  \MC(\delta) \ := \ \seccone(\Delta(2,n)^{-\delta}) \cap \MC(n) \; ,
\end{equation}
this is the set of all pseudo-metrics on $n$ points which lie in the closed secondary cone of $\delta$.
Observe that each secondary metric cone is pointed, as it is contained in the nonnegative orthant.
Moreover, the secondary fan is complete, and so any height function on $\Delta(2,n)$ gives rise to a secondary metric cone.
A pseudo-metric is a \emph{metric} if additionally $D$ attains strictly positive values.
The following result translates between secondary cones and secondary metric cones.
\begin{lemma}\label{lem:metric}
  Let $\delta, \delta':\tbinom{[n]}{2}\to\RR$ be arbitrary.
  We have $\MC(\delta)=\MC(\delta')$ if and only if $\seccone(\Delta(2,n)^{-\delta})=\seccone(\Delta(2,n)^{-\delta'})$.
  That is, the map which takes the secondary cone $\seccone(\Delta(2,n)^{-\delta})$ to its secondary metric cone $\MC(\delta)$ is injective.
  Furthermore, we have $\dim \seccone(\Delta(2,n)^{-\delta})= \dim \MC(\delta)$.
\end{lemma}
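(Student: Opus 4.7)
My plan is to establish the identity
\[
\seccone(\Delta(2,n)^{-\delta}) \ = \ \MC(\delta) + L,
\]
where $L$ denotes the lineality space of the secondary fan $\Sigma(2,n)$, consisting of all height functions of the form $(i,j) \mapsto a_i + a_j + b$ with $a\in\RR^n$ and $b\in\RR$. Both the injectivity statement and the dimension equality will fall out of this identity: the ``$\Leftarrow$'' half of the biconditional is trivial (intersect both secondary cones with $\MC(n)$), and the ``$\Rightarrow$'' half follows because $\MC(\delta) = \MC(\delta')$ then forces $\MC(\delta) + L = \MC(\delta') + L$. Note that $\1 \in L$, since the constant-one dissimilarity corresponds to taking $a=0$ and $b=1$.

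For the identity itself, the key step is an elementary shift-and-add trick. For any lifting $\omega \colon \tbinom{[n]}{2} \to \RR$, I claim that for all sufficiently large $c > 0$ the shifted function $\omega + c\1$ is a pseudo-metric. Both nonnegativity $\omega(i,j) + c \geq 0$ and the triangle inequality $\omega(i,k) + c \leq \omega(i,j) + \omega(j,k) + 2c$ reduce to finitely many explicit lower bounds on $c$. Since $\1 \in L$ and $L$ lies in every secondary cone of $\Sigma(2,n)$, shifting by $c\1$ preserves the induced regular subdivision. Thus, if $\omega \in \seccone(\Delta(2,n)^{-\delta})$, then $\omega + c\1 \in \seccone(\Delta(2,n)^{-\delta}) \cap \MC(n) = \MC(\delta)$, and the decomposition $\omega = (\omega + c\1) + (-c\1)$ places $\omega$ into $\MC(\delta) + L$. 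The reverse containment is immediate.

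For the dimension claim, I would apply the same shift to a lineality element $\ell \in L$: both $\ell + c\1$ and $c\1$ belong to $L \cap \MC(n) \subseteq \MC(\delta)$ for $c$ sufficiently large, so the difference $\ell = (\ell + c\1) - c\1$ lies in the linear span of $\MC(\delta)$. Hence $L$ is contained in that linear span, and combined with the identity $\seccone(\Delta(2,n)^{-\delta}) = \MC(\delta) + L$, this gives the two cones the same linear span and hence the same dimension. The point that deserves most care is the assertion that shifting by a lineality element preserves the induced subdivision; this is standard, but under the sign convention $\omega \leftrightarrow -\delta$ one should verify that $L$ is symmetric under negation, which is clear as $L$ is a linear subspace. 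I do not foresee any substantive obstacle beyond that.
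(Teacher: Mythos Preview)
Your proof is correct and uses the same key idea as the paper: shifting by a large multiple of $\1 \in L$ lands any height function inside $\MC(n)$ without changing the secondary cone. The paper applies this shift to the single point $-\delta$ and argues via interior points of $\MC(n)$, whereas you package the same trick into the explicit identity $\seccone(\Delta(2,n)^{-\delta}) = \MC(\delta) + L$ (which the paper leaves implicit) and then derive both injectivity and the dimension equality from it; the two arguments are equivalent in substance.
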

\begin{proof}
  Let $\delta$ be an arbitrary height function on the vertices of $\Delta(2,n)$, which may attain negative values.
  Because the all-ones vector $\1$ lies in the lineality space of the secondary fan, the height function $\alpha\1-\delta$ induces the same subdivision on $\Delta(2,n)$ as $-\delta$, for any real number $\alpha$.
  When $\alpha$ is positive and large enough, the height function $\alpha\1-\delta$ is strictly positive and satisfies the triangle inequalities, and so $\alpha\1-\delta$ is a pseudo-metric.
  For the metric given by the vector $\1$ all triangle inequalities are strict; hence $\alpha\1-\delta$ even lies in the interior of the metric cone $\MC(n)$ for $\alpha\gg0$.

  By construction of the (closed) secondary cone $\seccone(\Delta(2,n)^{-\delta})$ the height function $\delta$ always lies in the relative interior of $\seccone(\Delta(2,n)^{-\delta})$.
  So we obtain $\dim\MC(\delta)=\dim\seccone(\Delta(2,n)^{\alpha\1-\delta})=\dim\seccone(\Delta(2,n)^{-\delta})$.
  The same argument also shows that $\MC(\delta)=\MC(\delta')$ implies that $\seccone(\Delta(2,n)^{-\delta})=\seccone(\Delta(2,n)^{-\delta'})$.
\end{proof}
Lemma \ref{lem:metric} is implicit in \cite{SturmfelsYu:2004}.
However, some details are subtle.
For instance, \cite[Corollary 10]{SturmfelsYu:2004} is not fully correct; see Theorem \ref{thm:Sigma26} and Corollary \ref{cor:MF6} below.
Next we describe how the split pseudo-metrics are related to the secondary fan.
\begin{remark}\label{rem:splits}
  The negatives of the pseudo-metrics of types $D_{2,n-2}, \dots, D_{\lfloor n/2\rfloor, \lceil n/2\rceil}$ induce splits of $\Delta(2,n)$.
  Up to symmetry, these are all the splits of $\Delta(2,n)$; see \cite[Theorem 5.3]{HerrmannJoswig:2008}.
  The subdivision of $\Delta(2,n)$ induced by the negative of a pseudo-metric of type $D_{1,n-1}$ is trivial; i.e., the vectors $D_{\{i\},[n]-\{i\}}$, for $i\in[n]$, lie in the lineality space of $\Sigma(2,n)$.
  Since these $n$ vectors are linearly independent, and the dimension of the lineality space equals $n$, the negatives of the pseudo-metrics of type $D_{1,n-1}$ span the entire lineality space.
\end{remark}

The \emph{metric fan} on $n$ points is the polyhedral fan $\MF(n)$ formed by the secondary metric cones.
The purpose of this section is to explore the relationship between the metric fan $\MF(n)$ and the secondary fan $\Sigma(2,n)$.
Our definition deviates from \cite{SturmfelsYu:2004}, where $\Sigma(2,n)$ and $\MF(n)$ are not distinguished.
\begin{proposition}\label{prop:metric}
  The metric fan $\MF(n)$ has the following types of rays:
  \begin{enumerate}
  \item negatives of rays of the secondary fan $\Sigma(2,n)$;
  \item one additional orbit, corresponding to the $n$ split pseudo-metrics of type $D_{1,n-1}$.
  \end{enumerate}
\end{proposition}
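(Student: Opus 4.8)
The plan is to realise $\MF(n)$ as the restriction of the secondary fan $\Sigma(2,n)$ to the metric cone $\MC(n)$ and then read off the rays of that restriction. As $\delta$ runs over all height functions, the cells $\MC(\delta)=\seccone(\Delta(2,n)^{-\delta})\cap\MC(n)$ run over all $\sigma\cap\MC(n)$ with $\sigma$ a cell of $\Sigma(2,n)$, and together with their faces these are the cells of $\MF(n)$. From Lemma~\ref{lem:metric} I would use two facts: the assignment $\sigma\mapsto\sigma\cap\MC(n)$ is injective, and it preserves dimension. Since every cell $\sigma$ contains the $n$-dimensional lineality space $L$ of $\Sigma(2,n)$, each cell $\sigma\cap\MC(n)$ has dimension at least $n\geq 3$; hence no cell $\sigma\cap\MC(n)$ is itself a ray, and every ray of $\MF(n)$ is a proper face of some $\sigma\cap\MC(n)$, generated by a pseudo-metric $D$. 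I would then distinguish whether $\rho:=\RR_{\geq0}D$ lies in $L$ or not.

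For the rays inside $L$: since $L$ is the minimal face of every cell $\sigma$, the cone $L_+:=L\cap\MC(n)$ is a face of every cell $\sigma\cap\MC(n)$ of $\MF(n)$. By Remark~\ref{rem:splits} the split pseudo-metrics $D_{\{i\},[n]\setminus\{i\}}$, $i\in[n]$, are linearly independent, lie in $\MC(n)$, and span $L$; using that $\MC(n)+L_+\subseteq\MC(n)$ one sees that $L_+$ is exactly the simplicial cone they generate. Hence these $n$ pseudo-metrics of type $D_{1,n-1}$ are rays of $\MF(n)$; they form a single $\Sym(n)$-orbit, and — since a cell of a fan contained in a larger cell is a face of it — they are precisely the rays of $\MF(n)$ contained in $L$. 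This is type~(2).

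For a ray $\rho=\RR_{\geq0}D$ with $D\notin L$, the goal is to show that the associated regular subdivision of $\Delta(2,n)$ is coarsest; by the sign convention relating a pseudo-metric to its lift (the $-D$ in the envelope, cf.\ Remark~\ref{rem:upper-lower}) this is exactly the assertion that $\rho$ is the negative of a ray of $\Sigma(2,n)$. Let $\sigma\in\Sigma(2,n)$ be the cell carrying the relevant lift in its relative interior; then $\rho$ is an extreme ray of $C:=\sigma\cap\MC(n)$. The plan is to pin down $\dim\sigma$ from: (i) Lemma~\ref{lem:metric}, which gives $\dim C=\dim\sigma$; (ii) $L\subseteq\sigma$, so that $L_+$ is an $n$-dimensional face of $C$ while $\rho$, being extreme with $D\notin L$, meets $L_+$ only in the origin; and (iii) the description of extremality: $\rho$ is an extreme ray of $C$ precisely when $\mathrm{span}\,\sigma$ meets the span $V_\Phi$ of the face $\Phi$ of $\MC(n)$ whose relative interior contains $D$ in exactly the line $\RR D$. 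Passing to the quotient $W=\RR^{\binom n2}/L$ (here $\pi(C)=\pi(\sigma)$ because $\MC(n)+L=\RR^{\binom n2}$, the cut cone being full-dimensional, and $\pi(D)$ then lies in the relative interior of $\pi(\sigma)$), matching the dimension count from (i) with the combinatorics in (iii) — i.e.\ with how the triangle and nonnegativity facets of $\MC(n)$ cut $\mathrm{span}\,\sigma$ once $L$ is already inside it — should force $\dim\pi(\sigma)=1$, i.e.\ the subdivision is coarsest. Conversely, for a coarsest regular subdivision $S=\Delta(2,n)^{\omega}$ the construction in the proof of Lemma~\ref{lem:metric} produces a pseudo-metric in $\MC(n)$ inducing $S$; subtracting the largest multiple of $\sum_i D_{\{i\},[n]\setminus\{i\}}$ that still leaves it in $\MC(n)$ pushes it onto $\partial\MC(n)$ and exhibits a ray of $\MF(n)$ lying over $S$. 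Together with $\Sym(n)$-equivariance this yields the two types claimed and shows they exhaust the rays of $\MF(n)$.

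The step I expect to be the main obstacle is the dimension/transversality argument in the previous paragraph: showing that a ray $\rho\nsubseteq L$ of $\MF(n)$ cannot arise from a non-coarsest subdivision. The subtle point is that $\pi(D)$ may lie on $\partial\pi(\MC(n))$ rather than in its interior, so that extremality of $\rho$ in $C$ does not by itself contradict $\dim\pi(\sigma)\geq 2$; one genuinely needs $\MC(n)+L=\RR^{\binom n2}$ (equivalently $\pi(\MC(n))=W$) to collapse this, which rests on the full-dimensionality of the cut cone, and one must combine it carefully with the facet description of $\MC(n)$ and with $L\subseteq\mathrm{span}\,\sigma$ to conclude $\dim\sigma=n+1$. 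A secondary technical point is checking, in the converse direction, that the minimal shift into $\MC(n)$ lands on a face of $\MC(n)$ transverse to $\sigma$, which again uses $L\subseteq\sigma$. Once these are settled, the disjointness of the two ray families, their exhaustiveness, and the orbit count in~(2) are immediate.
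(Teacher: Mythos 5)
Your analysis of the rays inside the lineality space $L$ is correct and matches the paper's use of Remark~\ref{rem:splits}: $L\cap\MC(n)$ is the simplicial cone on the $n$ split pseudo-metrics of type $D_{1,n-1}$ (the cleanest justification being that for $\sum_i c_i D_{\{i\},[n]\setminus\{i\}}$ the triangle inequality at $j$ reads $2c_j\geq 0$, forcing $c_j\geq 0$). The genuine gap is exactly the step you yourself flag: showing that a ray $\rho=\RR_{\geq 0}D$ of $\MF(n)$ with $D\notin L$ can only come from a coarsest subdivision. What you offer there — pass to the quotient $\pi:\RR^{\binom{n}{2}}\to W$, use $\MC(n)+L=\RR^{\binom{n}{2}}$, and ``match the dimension count with the combinatorics of how the triangle facets cut $\operatorname{span}\sigma$'' — is a hope, not an argument, and the ingredient you single out as decisive ($\pi(\MC(n))=W$) does not close it. The dangerous scenario is a low-dimensional face $\Phi$ of $\MC(n)$ (for instance an extreme ray of the metric cone) passing through the relative interior of a secondary cone $\sigma$ with $\dim\pi(\sigma)\geq 2$: then $\sigma\cap\Phi$ is a one-dimensional face of $\sigma\cap\MC(n)$, hence a ray of $\MF(n)$ not in $L$, whose underlying subdivision is not coarsest. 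Every fact you invoke — injectivity and dimension preservation from Lemma~\ref{lem:metric}, $L\subseteq\sigma$, surjectivity of $\pi$ on $\MC(n)$, extremality of $\rho$ in $\sigma\cap\MC(n)$, the facet description of $\MC(n)$ in the abstract — is compatible with such a configuration (one can build it for an abstract full-dimensional pointed cone and a fan with lineality), so no dimension/transversality count based only on these facts can force $\dim\pi(\sigma)=1$; one needs specific information about how the faces of $\MC(n)$ meet secondary cones.

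This is precisely where the paper's proof does its work, by a different route: it argues that any ray of a secondary metric cone which is neither a ray of $\MC(n)$ nor (the negative of) a ray of $\Sigma(2,n)$ would, in a double-description computation starting from the secondary cone and adding the triangle inequalities, have to arise on a two-dimensional secondary cone cut by a triangle hyperplane — impossible since every secondary cone contains the $n$-dimensional lineality space and $n>2$; and it then treats the rays of $\MC(n)$ as a separate case, using Lemma~\ref{lem:metric} to conclude that for such a ray $\delta$ the secondary cone of $\Delta(2,n)^{-\delta}$ is at most one-dimensional modulo lineality, hence $-\delta$ is a ray of $\Sigma(2,n)$ or $\delta$ lies in $L$ and is of type $D_{1,n-1}$. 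Your closing converse construction is also not sound as stated: subtracting the largest multiple of $\sum_i D_{\{i\},[n]\setminus\{i\}}$ merely pushes a metric onto $\partial\MC(n)$, which does not exhibit a one-dimensional cone of $\MF(n)$; that direction follows instead from the dimension statement of Lemma~\ref{lem:metric}, as in the paper. In short, your framework and the type-(2) half are right, but the decisive exclusion step is missing and the sketched transversality argument cannot supply it without new input about $\MC(n)$ versus the secondary cones.
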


The rays of the metric fan are called \enquote{prime metrics} in \cite{KoolenMoultonTonges:2000}.

\begin{proof}
  The fan $\MF(n)$ is the intersection of the fan $\Sigma(2,n)$ with the cone $\MC(n)$.
  Each ray of $\MC(n)$ is also a ray of $\MF(n)$, because $\Sigma(2,n)$ is a complete fan.
  By Lemma~\ref{lem:metric} mapping a secondary cone to its secondary metric cone preserves the dimension.
  Consequently, the negative of any ray of $\Sigma(2,n)$ is also a ray of $\MF(n)$.
  
  In general, the intersection of a fan and a cone may have other rays, too.
  It remains to explain why this does not happen here.
  Each cone of the metric fan arises as the intersection of a secondary cone with the metric cone.
  The rays of such a secondary metric cone can be determined via the double description method, which is a classical (dual) convex hull algorithm.
  Here we need the homogeneous form; see, e.g., \cite[Algorithm 5.3]{Polyhedral+and+Algebraic+Methods} for a textbook reference.
  We let the double-description method start with the secondary cone, where we know both the facets and the minimal faces; this is the \emph{double description} from which the algorithm gets its name.
  Any ray of the secondary metric cone which is neither a ray of $\MC(n)$ nor a ray of $\Sigma(2,n)$ arises as the intersection of some $2$-dimensional secondary cone with a hyperplane from one of the triangle inequalities.
  Recall that the triangle inequalities define the facets of $\MC(n)$.
  The dimension of the lineality space of $\Sigma(2,n)$ equals $n > 2$, and each secondary cone contains that lineality space.
  Therefore, there is no secondary cone of a suitable dimension to produce any additional rays.

  Let $\delta$ be an arbitrary ray of $\MC(n)$.
  It remains to clarify that $-\delta$ is either also a ray of $\Sigma(2,n)$ or $\delta$ is of type $D_{n-1}$.
  Since $\delta$ is a ray of $\MF(n)$, the secondary metric cone $\MC(\delta)$ is one-dimensional.
  By Lemma~\ref{lem:metric} the secondary cone $\seccone(\Delta(2,n)^{-\delta})$ is at most one-dimensional modulo linealities.
  Either that dimension is exactly one, whence $-\delta$ is a ray of $\Sigma(2,n)$.
  Or $-\delta$ lies in the lineality space, which is generated by the pseudo-metrics of type $D_{1,n-1}$.
\end{proof}

\begin{table}[tb]
  \centering
  \caption{Spread statistics for coarsest subdivisions of $\Delta(2,6)$}
  \label{tab:Delta26:spreads}
  \begin{tabular*}{.8\linewidth}{@{\extracolsep{\fill}}lcccccc@{}}
    \toprule
    spread & 2 & 5 & 6 & 7 & 10 & 11 \\
    \midrule
    orbits & 2 & 2 & 2 & 3 & 1 & 3 \\
    \bottomrule
  \end{tabular*}
\end{table}

The next example illustrates Lemma~\ref{lem:metric} and Proposition~\ref{prop:metric}.
\begin{example}\label{exmp:thrackle}
  The \emph{thrackle metric}, $T=T_n$, on $n$ points is defined by $T(i,j) = T(j,i) = (j-i)\cdot (n-j+i)$ for $i<j$, see \cite{DeLoeraSturmfelsThomas:1995}.
  It is generic, i.e., the subdivision $\Delta(2,n)^{-T}$ is a triangulation, known as the \emph{thrackle triangulation} of the second hypersimplex.

  For instance, for $n=6$, we have $\dim\seccone(\Delta(2,6)^{-T})=15$, including the six-dimensional lineality space.
  That secondary cone is simplicial, whence $\seccone(\Delta(2,6)^{-T})$ has nine rays, modulo linealities.
  The secondary metric cone $\MC(T)$ is simplicial, too; it has 15 rays, as the secondary metric cones are pointed.
  Six rays are of type $D_{1,5}$, another six of type $D_{2,4}$, and finally, we have three rays of type $D_{3,3}$.
  In particular, all the rays of $\MC(T)$ are induced by split pseudo-metrics.
  That is to say, applying the Split Decomposition Theorem~\ref{thm:split-decomposition} to the thrackle metric $T_6$ yields a trivial split-prime part.
\end{example}

The regular triangulations of $\Delta(2,6)$ have been determined with \TOPCOM \cite{TOPCOM-paper}.
We verified his result with our own software \mptopcom \cite{mptopcom-paper}, and the output was further processed with \polymake \cite{DMV:polymake}.
Altogether we recover the following result; see Section~\ref{sec:computation} for further details on the computations.
\begin{theorem}[{Sturmfels and Yu~\cite{SturmfelsYu:2004}; using \cite{TOPCOM-paper}}]
  \label{thm:Sigma26}
  The hypersimplex $\Delta(2,6)$ has exactly 194{,}160 regular triangulations in $339$ orbits, with respect to the natural $\Sym(6)$ action.
  Moreover, it has exactly 13 orbits of regular coarsest subdivisions.
  The largest spread of a coarsest subdivision equals 11.
\end{theorem}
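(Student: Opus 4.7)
The plan is to verify the statement computationally by enumerating the full secondary fan $\Sigma(2,6)$ and then extracting the two pieces of data: the regular triangulations (rays of the inverse order, i.e., maximal cones modulo lineality) and the coarsest regular subdivisions (the rays of $\Sigma(2,6)$). Since $\Sigma(2,6)$ lives in $\RR^{\binom{6}{2}} = \RR^{15}$ with a six-dimensional lineality space (cf.\ Remark~\ref{rem:splits}), everything is of modest enough dimension that this is feasible.

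First I would reproduce the enumeration of regular triangulations using \mptopcom, which performs a reverse-search traversal of the flip graph restricted to regular triangulations. The vertex set of $\Delta(2,6)$ consists of the fifteen $0/1$-vectors of length six with exactly two ones; these are fed to \mptopcom with the symmetry group $\Sym(6)$, and the algorithm produces one representative per $\Sym(6)$-orbit, together with the orbit size. Summing the orbit sizes should yield 194{,}160, and the number of orbits should be 339, matching the result of \cite{TOPCOM-paper} as reported in \cite{SturmfelsYu:2004}. This settles the first statement.

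For the second statement I would pass the list of orbit representatives of regular triangulations into \polymake, and for each representative compute its secondary cone as the intersection of the relevant GKZ-style inequalities (one per non-simplex circuit / flip). Each secondary cone has known facet description, and the rays of $\Sigma(2,6)$ are then recovered by taking, for every triangulation, the rays of its secondary cone that are not contained in the lineality space, and quotienting the resulting list by the $\Sym(6)$-action. Equivalently, one may walk up the face lattice of $\Sigma(2,6)$ by dropping one defining equation at a time until one reaches a ray; this is the standard convex-hull-type computation (the ``double description method'' recalled in the proof of Proposition~\ref{prop:metric}). For each ray one records the induced subdivision and its spread. The output should be exactly 13 orbits, and the spread statistics of Table~\ref{tab:Delta26:spreads} should appear; in particular, the maximum spread is 11.

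The main obstacle is not mathematical but bookkeeping: the orbits of rays must be computed reliably, since a naive list will contain each orbit many times over. I would address this by normalizing each candidate ray (scaling so that integral entries are coprime, and choosing the lexicographically smallest representative within its $\Sym(6)$-orbit) before deduplication. A sanity check is provided by the families constructed earlier in the paper: the splits from Remark~\ref{rem:splits} must appear with spread two, the rays $\lambda_{2,6}$ and $\kappa_{2,6}$ from Proposition~\ref{prop:lambda} and Theorem~\ref{thm:kappa} must appear (coinciding by Observation~\ref{obs:lambda=kappa}) with spread six, and the overall count of 13 orbits must be consistent with the 13 tight spans depicted in Figure~\ref{fig:Delta26:tightspans} together with the two orbits of splits. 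The claim that the maximal spread equals 11 can be certified independently by inspecting only those orbits of rays whose spread is at least as large as 11 and verifying that no larger one is missed, which is immediate once the full list of 13 orbits has been produced.
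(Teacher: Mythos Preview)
Your proposal is correct and follows essentially the same computational pipeline the paper describes (before the theorem and in Section~\ref{sec:computation}): enumerate regular triangulations up to symmetry with \mptopcom via reverse search on the flip graph, compute the rays of each secondary cone in \polymake, and deduplicate by passing to lex-minimal orbit representatives before reading off the spreads. One small slip in your sanity check: Figure~\ref{fig:Delta26:tightspans} displays only ten tight spans, not thirteen, since the two split orbits are omitted and the two orbits of spread five share a tight span; the count $13 = 2 + 11$ is consistent with this.
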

The spreads which occur for coarsest subdivisions of $\Delta(2,6)$ and how they are distributed over the 13 orbits are given in Table~\ref{tab:Delta26:spreads}.
Their tight spans are shown in Figure~\ref{fig:Delta26:tightspans}.
The (two orbits of) splits (whose tight span is a single edge) are omitted; and the two types of coarsest subdivisions of spread five share the same tight span; so ten distinct tight spans remain.
Using \polymake, Sturmfels and Yu processed the 339 orbits of maximal cones of $\MF(6)$ to find all rays.
Proposition~\ref{prop:metric} says the the fans $\Sigma(2,6)$ and $\MF(6)$ differ by one orbit of rays, of type $D_{1,5}$.
This explains the discrepancy between Theorem~\ref{thm:Sigma26} and \cite[Corollary 10]{SturmfelsYu:2004}, which erroneously claims that there are 14 orbits of regular coarsest subdivisions.
\begin{corollary}[{Koolen, Moulton and T\"onges \cite{KoolenMoultonTonges:2000}; Sturmfels and Yu~\cite{SturmfelsYu:2004}}]
  \label{cor:MF6}
  The metric fan $\MF(6)$ has exactly 14 orbits of rays, with respect to the natural $\Sym(6)$ action.
\end{corollary}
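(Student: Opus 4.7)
The plan is to combine Theorem~\ref{thm:Sigma26} with Proposition~\ref{prop:metric}. The latter partitions the rays of $\MF(6)$ into two disjoint classes: those arising as negatives of rays of the secondary fan $\Sigma(2,6)$, and the single $\Sym(6)$-orbit of split pseudo-metrics of type $D_{1,5}$. These two classes are indeed disjoint, since by Remark~\ref{rem:splits} the vectors of type $D_{1,5}$ span the lineality space of $\Sigma(2,6)$ and therefore cannot appear among its rays.

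For the first class, I would observe that the natural $\Sym(6)$-action on $\RR^{\binom{6}{2}}$ commutes with coordinate-wise negation, so the $\Sym(6)$-orbits of rays of $\Sigma(2,6)$ are in bijection with the orbits of their negatives inside $\MF(6)$. Theorem~\ref{thm:Sigma26} then supplies exactly 13 such orbits, one for each orbit of regular coarsest subdivisions of $\Delta(2,6)$. Adding the one extra orbit of type $D_{1,5}$ produces the desired total $13+1=14$.

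There is essentially no technical obstacle remaining: the genuine difficulty was already absorbed into Proposition~\ref{prop:metric}, whose argument rules out any additional rays that could in principle emerge upon intersecting the fan $\Sigma(2,6)$ with the metric cone $\MC(6)$. The mildest care needed is to verify that no two distinct $\Sym(6)$-orbits of rays of $\Sigma(2,6)$ collapse under the passage to secondary metric cones; this is immediate from Lemma~\ref{lem:metric}, which furnishes an injective, dimension-preserving correspondence between secondary cones and secondary metric cones, and hence in particular between their respective orbits of rays.
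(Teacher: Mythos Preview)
Your proposal is correct and follows essentially the same approach as the paper: the paper's argument is the single sentence preceding the corollary, namely that Proposition~\ref{prop:metric} shows $\MF(6)$ and $\Sigma(2,6)$ differ by exactly one orbit of rays (of type $D_{1,5}$), which combined with the 13 orbits from Theorem~\ref{thm:Sigma26} gives $13+1=14$. Your additional care in invoking Remark~\ref{rem:splits} for disjointness and Lemma~\ref{lem:metric} to preclude orbit collapse makes explicit what the paper leaves implicit, but the underlying route is the same.
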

%Three of these 14 orbits are given by $D_{1,5}$, $D_{2,4}$ and $D_{3,3}$.

% In this way, the Dressians $\Dr(2,n)$, for arbitrary $n\geq 3$, are spaces of metric trees on $n$ labeled leaves.
% The metric structure on the tight span is induced by cone polarity; see \cite[Theorem 10.59]{ETC}.
% The split decomposition theorem of Bandelt and Dress \cite{BandeltDress:1992} is the special case of Theorem~\ref{thm:split-decomposition}, where $A$ is the set of vertices of $\Delta(2,n)$, and $-\omega$ is a dissimilarity map, up to linealities.

\subsection*{An example of a metric space on six points}
A motivation of our present work comes from the wish to analyze finite metric spaces beyond split decomposition; see Theorem~\ref{thm:split-decomposition}.
To illustrate our results we present one comprehensive example.
This is a finite metric space arising from short DNA fragments of six species of bees, where the distance between two taxa is given by the Hamming distance of the corresponding DNA fragments.
This is an example data set\footnote{\url{https://github.com/husonlab/splitstree6/blob/main/examples-testing/bees-with-wikipedia-images.stree6}, retrieved on January 3, 2024} % saved locally as splitstree/bees-with-wikipedia-images.stree6
for the software \texttt{SplitsTree6} \cite{splitstree}, which is explained in \cite{HusonBryant:2005}.
The same data was already used in \cite[Example 10.76]{ETC}.
One half of the (symmetric) distance matrix reads as follows:
\[
  \beta \ = \
  \begin{pmatrix}
    0.0 & 0.09010340 &	0.10339734 & 0.09601182	& 0.00443131 & 0.07533235\\
        & 0.0& 0.09305761 & 0.09010340 & 0.09305761 & 0.10044313\\
        & & 0.0& 0.11669128 & 0.10635155 & 0.10339734\\
        & & & 0.0 & 0.09896603	& 0.09896603\\
        & & & & 0.0 & 0.07828656\\
        & & & & & 0.0		
  \end{pmatrix}
\]
We read the matrix $-\beta$ as a height function on the vertices of the hypersimplex $\Delta(2,6)$; i.e., $-\beta$ lifts the vertex $e_{i}+e_{j}$ to the negative distance between $i$ and $j$ in the metric space.
Whenever we write a distance function as a single row vector, then we apply the descending lexicographic ordering for the vertices of $\Delta(2,n)$ as in Remark~\ref{rem:ordering}.
This amounts to reading the entries in a distance matrix above the diagonal and row-wise from top to bottom.

The secondary cone $\seccone(\Delta(2,6)^{-\beta})$ turns out to have full dimension 15, including the 6-dimensional lineality space; this implies that $-\beta$ induces a triangulation.
That secondary cone has nine rays corresponding to the coarsest regular subdivisions refining the triangulation $\Delta(2,6)^{-\beta}$.
Those rays lie in six distinct orbits.
Five out of the nine rays are splits, where $s_1,s_2,s_3,s_4$ lie in one orbit, and $s_5$ is in an orbit by itself; see Table~\ref{tab:beta:rays}.
In this section we write all vectors and matrices as pseudo-metrics, i.e., with nonnegative coefficients.
That is why Table~\ref{tab:beta:rays} lists negative rays.

% \definecolor{1}{rgb}{1, 0.75, 0}
% \definecolor{2}{rgb}{0.45, 0.61, 0}
% \definecolor{3}{rgb}{0.55, 0.71, 0.7}
% \definecolor{4}{rgb}{0.65, 0.81, 0}

\begin{table}[bt]
   \centering
   \caption{Negatives of the rays of the secondary cone of $\Delta(2,6)^{-\beta}$.
     The six orbits are separated by extra line space.
     Orbits sorted by descending coherency index}
   \label{tab:beta:rays}
   % Cell 24 in bees_lars.ipynb
   \small
   \begin{tabular*}{.9\linewidth}{@{\extracolsep{\fill}}clcl@{}}
     \toprule
     ray & coordinates & spread & coherency index \\
     \midrule
     % $r_{5}$ & $(1, 1, 1, 1, 1, 0, 0, 0, 0, 0, 0, 0, 0, 0, 0)$ & 1 & $ Inf $\\
     % $r_{6}$ & $(1, 0, 0, 0, 0, 1, 1, 1, 1, 0, 0, 0, 0, 0, 0)$ & 1 & $ Inf $\\
     % $r_{7}$ & $(0, 1, 0, 0, 0, 1, 0, 0, 0, 1, 1, 1, 0, 0, 0)$ & 1 & $ Inf $\\
     % $r_{8}$ & $(0, 0, 1, 0, 0, 0, 1, 0, 0, 1, 0, 0, 1, 1, 0)$ & 1 & $ Inf $\\
     % $r_{9}$ & $(0, 0, 0, 1, 0, 0, 0, 1, 0, 0, 1, 0, 1, 0, 1)$ & 1 & $ Inf $\\
     % $r_{10}$ & $(0, 0, 0, 0, 1, 0, 0, 0, 1, 0, 0, 1, 0, 1, 1)$ & 1 & $ Inf $\\
     % \addlinespace
     $s_1$ & $(1, 1, 1, 0, 1, 0, 0, 1, 0, 0, 1, 0, 1, 0, 1)$ & 2 & $ 0.03175776 $\\ % $r_{11}$
     $s_2$ & $(1, 1, 0, 0, 0, 0, 1, 1, 1, 1, 1, 1, 0, 0, 0)$ & 2 & $ 0.00886262 $\\ % $r_{12}$
     $s_3$ & $(1, 0, 1, 0, 0, 1, 0, 1, 1, 1, 0, 0, 1, 1, 0)$ & 2 & $ 0.00664697 $\\ % $r_{13}$
     $s_4$ & $(0, 1, 0, 0, 1, 1, 0, 0, 1, 1, 1, 0, 0, 1, 1)$ & 2 & $ 0.00147710 $\\ % $r_{14}$
     \addlinespace
     $s_5$ & $(1, 1, 1, 0, 0, 0, 0, 1, 1, 0, 1, 1, 1, 1, 0)$ & 2 & $ 0.00516987 $\\ % $r_{15}$
     \addlinespace
     $r_1$ & $(1, 2, 2, 0, 1, 1, 1, 1, 2, 2, 2, 1, 2, 1, 1)$ & 5 & $ 0.00369276 $\\ % $r_{3}$
     \addlinespace
     $r_2$ & $(1, 2, 2, 1, 2, 1, 1, 2, 3, 2, 3, 2, 1, 2, 1)$ & 7 & $ 2\cdot 10^{-9}$\\ % $r_{1}$
     \addlinespace
     $r_3$ & $(1, 2, 2, 1, 1, 1, 1, 2, 2, 2, 3, 1, 1, 1, 2)$ & 6 & $ 2.5\cdot 10^{-9} $\\ % $r_{2}$
     \addlinespace
     $r_4$ & $(1, 2, 2, 1, 1, 1, 1, 2, 2, 2, 1, 1, 1, 1, 0)$ & 5 & $ 5\cdot 10^{-10} $\\ % $r_{4}$
     \bottomrule
   \end{tabular*}
\end{table}

Now we can split decompose $-\beta$, as described in Theorem \ref{thm:split-decomposition}.
This amounts to subtracting from $-\beta$ the contributions (in terms of their coherency indices) from the split rays of $\seccone(\Delta(2,6)^{-\beta})$.
We obtain the split prime part $\beta_0$, which again we write as a distance matrix:
\[
  \beta_0 = % Matrix comes from cell 28 in bees_lars.ipynb
  \begin{pmatrix}
     0.0 & 0.0376662 & 0.0561300 & 0.0524372 & 0.0044313 & 0.0420975 & \\
          & 0.0 & 0.0849335 & 0.0812408 & 0.0406204 & 0.0782866 & \\
          &  & 0.0 & 0.0997046 & 0.0590842 & 0.0893648 & \\
          &  &  & 0.0 & 0.0553914 & 0.0856721 & \\
          &  &  &  & 0.0 & 0.0450517 & \\
          &  &  &  &  & 0.0 & \\
  \end{pmatrix}
\]
Inspecting $\beta_0$ reveals that some coefficients are still fairly large, relative to the sizes of the coefficients of $\beta$.
That is to say, in this case, the split decomposition leaves a lot unaccounted for.
A second look at Table~\ref{tab:beta:rays} tells us that the non-split ray $r_1$, of spread five, is responsible for the bulk of the split-prime part.
The coherency indices of the remaining three rays, of spreads five, six and seven, are at least six orders of magnitude smaller than the contribution of $r_1$.
This example illustrates the practical usefulness of finding and analyzing non-split rays of low spread of $\Delta(2,n)$.

% \begin{figure}[t]
%   \centering
%   \includegraphics[width=0.3\textwidth]{img/260.pdf}
%   \includegraphics[width=0.3\textwidth]{img/189.pdf}
%   \includegraphics[width=0.3\textwidth]{img/813.pdf}
%   \caption{Tight spans of the non-split rays of the bee subdivision; see Table~\ref{tab:beta:rays}.
%     Their spreads are 5, 6 and 7.
%     Note that there are two distinct orbits of rays with spread five; yet their tight spans are isomorphic (left image).}
%   \label{fig:bees:tightspan}
% \end{figure}

\section{Further Computations}
\label{sec:computation}

For the results in this section, which  are new, we employed \TOPCOM \cite{TOPCOM-paper}, \mptopcom \cite{mptopcom-paper} and \polymake \cite{DMV:polymake}.
The data and our code for post processing are available online\footnote{See \url{https://polymake.org/triangulations/} and \url{https://github.com/dmg-lab/coarsest_subdivisions_of_hypersimplices}}.
Our first result of this section has been obtained with \mptopcom and \polymake.
It is the analog of Theorem~\ref{thm:Sigma26} for metric spaces on seven points.
\begin{theorem}\label{thm:computation:Delta27}
  The hypersimplex $\Delta(2,7)$ has exactly 153{,}209{,}697{,}210 regular triangulations in 30{,}485{,}496 orbits, with respect to the natural $\Sym(7)$ action.
  Moreover, it has exactly 13{,}147 orbits of regular coarsest subdivisions.
  The largest spread of a coarsest subdivision equals 35.
\end{theorem}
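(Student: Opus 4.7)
Since the statement is purely a computational enumeration, the plan is to carry out a symmetry-reduced exhaustive search using the same toolchain already employed in Theorem~\ref{thm:Sigma26}, but scaled up for the second hypersimplex on seven points. The overall strategy: enumerate regular triangulations modulo the $\Sym(7)$-action using \mptopcom, then traverse the face lattice of the secondary fan via \polymake to identify the orbits of rays (which correspond to coarsest regular subdivisions) and compute their spread statistics.

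First, I would fix the vertex configuration $V_{2,7}$ of $\Delta(2,7)$ (in the ordering of Remark~\ref{rem:ordering}) together with the induced linear action of $\Sym(7)$ by coordinate permutations. The flip graph of regular triangulations is connected \cite[\S2.4, \S5.4]{Triangulations}, so one can enumerate orbits of regular triangulations by starting from a single placing triangulation and performing a symmetric breadth-first search over bistellar flips, keeping only orbit representatives. This is exactly what \mptopcom is designed to do in a distributed fashion, using canonical forms of triangulations under the group action to recognize when two discovered triangulations lie in the same orbit. Running this enumeration yields the $30{,}485{,}496$ orbits of regular triangulations; the total count $153{,}209{,}697{,}210$ is recovered as the sum of the orbit sizes $|\Sym(7)|/|\mathrm{Stab}(T)|$ over orbit representatives $T$.

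Second, for each orbit representative $T$ I would compute the secondary cone $\seccone(T) \subset \RR^{\binom{7}{2}}$ by the usual GKZ construction: each flippable pair of adjacent maximal simplices in $T$ provides a facet inequality of $\seccone(T)$, and the facet-defining inequalities together with the $7$-dimensional lineality space give a full description. The rays of $\seccone(T)$ modulo lineality, computed by \polymake, correspond to the coarsest regular subdivisions that coarsen $T$. Collecting all such rays arising from all triangulations in the enumeration and reducing modulo $\Sym(7)$ yields all orbits of rays of the secondary fan $\Sigma(2,7)$, i.e., all orbits of coarsest regular subdivisions. Each such subdivision is reconstructed from a representative ray by projecting lower faces of the lift, from which the spread (number of maximal cells) is read off. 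The maximum of these spreads is $35$, and the total orbit count is $13{,}147$.

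The main obstacle is scale rather than mathematical subtlety: the flip graph at this size does not fit on a single machine, so the computation depends crucially on \mptopcom's parallel implementation and on an efficient canonical form for triangulations of $\Delta(2,7)$ under $\Sym(7)$. A secondary obstacle is bookkeeping of rays: since many triangulations share coarsenings, the same ray is found many times, and orbit deduplication must be performed at ray level, not only at triangulation level. To guard against errors, I would cross-check the orbit counts against invariants such as the sum of orbit sizes (matching the total triangulation count), independent verification for a handful of orbits with \TOPCOM, and consistency with the known $\Sigma(2,6)$ data from Theorem~\ref{thm:Sigma26} by restricting to face subconfigurations; the data and scripts performing all of this are made available as indicated in Section~\ref{sec:computation}.
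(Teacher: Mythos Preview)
Your proposal is correct and follows essentially the same computational pipeline as the paper: enumerate orbit representatives of regular triangulations with \mptopcom, compute the rays of each representative's secondary cone in \polymake, and deduplicate rays by canonical orbit representatives to obtain the coarsest subdivisions and their spreads. One minor technical point: \mptopcom traverses the flip graph by parallel \emph{reverse search} rather than symmetric breadth-first search, which matters for feasibility since reverse search avoids storing all visited triangulations; otherwise your outline matches the paper's procedure in Section~\ref{sec:computation}.
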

The secondary fan $\Sigma(2,7)$ is 21-dimensional with a 7-dimensional lineality space.
It is known that the hypersimplex $\Delta(k,n)$ admits nonregular triangulations if and only if $k \geq 2$, $n - k \geq 2$ and $n \geq 6$; see \cite[Theorem 6.3.21]{Triangulations}.
\TOPCOM computed that $\Delta(2,7)$ has a total of 189{,}355{,}661{,}460 not necessarily regular triangulations in 37{,}676{,}752 orbits.
The next result has again been obtained with \mptopcom and \polymake.
\begin{theorem}\label{thm:computation:Delta36}
  The hypersimplex $\Delta(3,6)$ has exactly 61{,}035{,}863{,}100 regular triangulations in 42{,}489{,}025 orbits, with respect to the natural $\Sym(6)\rtimes\ZZ/2$ action.
  Moreover, it has exactly 28{,}589 orbits of regular coarsest subdivisions.
  The largest spread of a coarsest subdivision equals 45.
\end{theorem}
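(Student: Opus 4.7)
The plan is to establish the counts computationally, following the same pipeline used for Theorem~\ref{thm:computation:Delta27}. First I would set up the vertex configuration $V_{3,6}\subset\RR^6$ of the $\binom{6}{3}=20$ vertices of the hypersimplex $\Delta(3,6)$, projected to an affine hyperplane so that it becomes full-dimensional. The automorphism group $\Sym(6)\rtimes\ZZ/2$ acts on $V_{3,6}$ via the natural coordinate permutation combined with the coordinatewise complementation $x\mapsto \1-x$ (this is available because $n=2k$). I would feed this configuration, together with its symmetry group, to \mptopcom to enumerate all regular triangulations up to symmetry; the output is a list of orbit representatives, from which the total count follows by orbit-stabilizer.

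Next, for each orbit representative triangulation $T$, I would use \polymake to compute the secondary cone $\seccone(T)$ in $\RR^{20}$. The rays of $\seccone(T)$, modulo the $7$-dimensional lineality space, are in bijection with the coarsest regular subdivisions refining $T$; this follows from the anti-isomorphism between the face lattice of $\Sigma(3,6)$ and the refinement poset of regular subdivisions recalled in Section~\ref{sec:subdivisions}. Collecting these rays across all orbits of maximal cones of $\Sigma(3,6)$ and then grouping them by the $\Sym(6)\rtimes\ZZ/2$ action yields the full list of orbits of coarsest subdivisions. For each such ray $\omega$, the spread of $\Delta(3,6)^\omega$ can be read off from the number of maximal cells of the induced subdivision, and scanning through all orbits gives the maximum spread.

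The main obstacle is the sheer scale of the enumeration: with roughly $6\cdot 10^{10}$ regular triangulations and $4\cdot 10^{7}$ symmetry orbits, running \mptopcom at this size requires careful parallelization and substantial compute time, and storing the intermediate data for post-processing is nontrivial. A secondary difficulty is avoiding double counting when identifying coarsest subdivisions: the same ray of $\Sigma(3,6)$ generically appears as a ray of the secondary cones of many different triangulations, so the orbit identification across distinct secondary cones must be done carefully, e.g., by normalizing each ray to a canonical representative within its $\Sym(6)\rtimes\ZZ/2$-orbit before hashing. A useful consistency check is to verify that the $13$ orbits of coarsest subdivisions of $\Delta(2,6)$ and the $13{,}147$ orbits for $\Delta(2,7)$ are recovered by the same pipeline (Theorem~\ref{thm:Sigma26} and Theorem~\ref{thm:computation:Delta27}), and that the coarsest subdivisions of $\Delta(3,6)$ constructed in Section~\ref{sec:eulerian} (the orbits of $\lambda_{3,6}$ and $\kappa_{3,6}$) appear in the final list. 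The reproducibility scripts and raw data are to be archived on \zenodo and the project repository linked in Section~\ref{sec:computation}.
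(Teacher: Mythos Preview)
Your proposal matches the paper's pipeline essentially step for step: \mptopcom for orbit-wise enumeration of regular triangulations under $\Sym(6)\rtimes\ZZ/2$, \polymake for the rays of each secondary cone, deduplication via canonical orbit representatives, and a final pass to read off spreads. One numerical slip: the lineality space of $\Sigma(3,6)$ has dimension $6$, not $7$ (it is $n$-dimensional, and here $n=6$; the $7$ you wrote belongs to $\Delta(2,7)$).
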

It turns out that each number between $2$ and $45$, except $4$, arises as the spread of a coarsest subdivision of $\Delta(3,6)$.
The secondary fan $\Sigma(3,6)$ is 20-dimensional with a 6-dimensional lineality space.
\TOPCOM computed that $\Delta(3,6)$ has a total of 85{,}793{,}497{,}200 not necessarily regular triangulations in 59{,}708{,}427 orbits, with respect to the $\Sym(6)\rtimes \ZZ_2$ action.
The orbit count with respect to the smaller group $\Sym(6)$ increases to 119{,}186{,}888.
The Dressian $\Dr(3,6)$ is much smaller; this is a $4$-dimensional fan (modulo linealities) with 65 rays in three orbits and 1005 maximal cones in seven orbits \cite{SpeyerSturmfels04,BendleEtAl:2024}.

Comparing the results of \TOPCOM and \mptopcom shows that the percentage of nonregular triangulations of the hypersimplices $\Delta(2,7)$ and $\Delta(3,6)$ is about $20\%$ and $30\%$, respectively.
For higher parameters, the non-regular triangulations will become the majority (by far).

\subsection*{Algorithms}
The procedure that we applied does not exploit any special features of the respective point set, which we denote $A$.
In particular, our analysis of $\Delta(2,7)$ and $\Delta(3,6)$ uses the same steps.

The computation begins with finding all regular subdivisions of $A$ up to symmetry with respect to a given permutation group action on $A$.
In \mptopcom this is accomplished by a parallel reverse search through the flip-graph \cite{mptopcom-paper}.
The current version 1.1.2 of \TOPCOM first enumerates all triangulations by a purely combinatorial method and then filters for the regular ones by solving linear programs.
We used the full group of affine linear transformations in both cases.
The group $\Sym(7)$ acting on $\Delta(2,7)$ has order 5040, whereas the group $\Sym(6)\rtimes\ZZ_2$ acting on $\Delta(3,6)$ has 1440 elements.

The regular triangulations of $A$ bijectively correspond to the vertices of the secondary polytope of $A$; its normal fan is the secondary fan.
Vertex coordinates for the secondary polytope are given by GKZ vectors of triangulations.
So, conceptually, the task of finding the coarsest regular subdivisions could then be performed via a convex hull computation.
However, for all interesting point sets, including $\Delta(2,7)$ and $\Delta(3,6)$, these computations are way too large to be feasible in practice; see \cite{polymake:2017} for a computational survey.
Therefore, the computation needs to be organized differently.

The crucial observation is that the number of coarsest subdivisions of $A$ is usually much smaller than the number of triangulations.
For instance, Theorems \ref{thm:computation:Delta27} and \ref{thm:computation:Delta36} exhibit that, for these examples, the number of regular triangulations is at least two orders of magnitude larger than the number of regular coarsest subdivisions.
So, it is affordable to compute the rays of the secondary cone of the regular triangulation which represents one orbit of regular triangulations.
This is also a (dual) convex hull computation, but a much smaller one.
Moreover, the task is embarrassingly parallel: each secondary cone can be dealt with individually.
The rays/coarsest subdivisions are computed multiple times, and it needs to be checked which of them lie in the same orbits.
This is done by finding the lexicographically minimal representative in each orbit.
The \polymake code is the script \texttt{rays\_of\_sec\_cones.pl}.

The final step is to compute the spread statistics of the the coarsest subdivisions.
To this end we construct one regular subdivision per orbit and count the maximal cells.
This is again a convex hull computation.

All our \polymake scripts and data files can be found at \url{https://github.com/dmg-lab/coarsest_subdivisions_of_hypersimplices}.
The datasets of rays and histograms that we computed are stored in \polymake's file format.

\subsection*{Technical details}
The computations with \mptopcom were run on a cluster at TU Berlin, running AlmaLinux.
For $\Delta(3,6)$ we used 256 slots, while for $\Delta(2,7)$ we only used 128 slots; the term \enquote{slot} is used by the cluster scheduling mechanism.
Since \mptopcom is output sensitive, it was not necessary to allocate large amounts of RAM, and 4GB of RAM per slot sufficed easily, while also boosting the computation using \mptopcom's caching mechanism.
The computation of all regular triangulations took about 13 hours for $\Delta(3,6)$ and was done in three segments using \mptopcom's checkpointing mechanism.
The computation for $\Delta(2,7)$ took roughly 132 hours in 10 segments.
While the resulting numbers of triangulations and the size of the inputs are in the same range, the time elapsed for the computations differs considerably.
The reason is that the results for $\Delta(2,7)$ were obtained several years ago with \mptopcom 1.0, but remained unpublished until now.
For $\Delta(3,6)$ we used \mptopcom 1.3 which has an improved mechanism for checking regularity.

\section{Conclusion}
\label{sec:conclusion}

It is an intriguing question if Corollary~\ref{cor:spread} also holds for rays of the Dressian.
While $\Dr(k,n)$ is known to contain $\TGr(k,n)$, the Dressian has maximal cones of various dimensions in general.
We are not aware of an argument that would imply that each ray of $\Dr(k,n)$ is contained in some maximal cone of dimension at least $\dim\TGr(k,n)$.
\begin{question}\label{quest:spread}
  Let $\omega$ be a ray of the Dressian $\Dr(k,n)$.
  Is it true that the spread of $\Delta(k,n)^\omega$ does not exceed $\tbinom{n-2}{k-1}-(k-1)\cdot(n-k-1)+1$?
\end{question}

The metric cones, and various of their relatives occur in optimization and elsewhere.
Many natural combinatorial conjectures arise, others have been refuted; see, e.g., Deza and Indik \cite{DezaIndik:2007}.
In our scenario a particularly interesting combinatorial object is the \emph{(primal) graph} of a cone; its nodes are the rays, and two such nodes are defined to be adjacent if the corresponding rays span a face of dimension two.
The same definition also makes sense for a fan.
The \emph{diameter} of a graph is the maximum length of a shortest path between any two nodes.
\begin{question}
  What are the diameters of the graphs of the cones $MC(n)$ and the fans $\Sigma(2,n)$ as functions of $n$?
\end{question}
For $n\leq 8$ the diameter of $\MC(n)$ is known to be at most three.
However, it is unknown if the diameter of the graph of $\MC(n)$ is bounded by any constant.
Even less is known about the graphs of $\Sigma(k,n)$.
The diameter of the graph of $\Sigma(2,5)$, shown in Figure~\ref{fig:Dr25}, equals two.

\subsection*{Acknowledgment}
We are indebted to Bernd Sturmfels for inspiring discussions concerning hypersimplices and metric spaces and to Andrei Com\u{a}neci and Benjamin Lorenz for valuable comments, both mathematical and software related.
Further, we thank Mathieu Dutour-Skiri\'c for confirming that the number of orbits of rays of $\MC(6)$, indeed, equals eight.
This corrects a minor error in \cite[Table 1]{DezaDutour:2018}; see also Table~\ref{tab:MC8:rays}.
The title of this article borrows from the title of a book by David Eisenbud \cite{Eisenbud:1995}.

%\bibliography{biblio.bib}
\printbibliography

\end{document}